\newtheorem{theorem}{Theorem}
\newtheorem{lemma}{Lemma}
\newtheorem{definition}{Definition}
\newtheorem{corollary}{Corollary}
\title{Tensor-based empirical interpolation method,\newline and its application in model reduction}
\author{
Brij Nandan Tripathi \\
Department of Electronics and Electrical Engineering\\
Indian Institute of Technology Guwahati\\
Assam, India \\
\texttt{brij18@iitg.ac.in}
\And
Hanumant Singh Shekhawat \\
Department of Electronics and Electrical Engineering\\
Indian Institute of Technology Guwahati\\
Assam, India \\
\texttt{h.s.shekhawat@iitg.ac.in}
\And
Seip Weiland \\
Department of Electrical Engineering\\
Eindhoven University of Technology\\
Eindhoven, The Netherlands \\
\texttt{s.weiland@tue.nl}
}
\begin{document}
\maketitle
\begin{abstract}
In general, matrix or tensor-valued functions are approximated using the method developed for vector-valued functions by transforming the matrix-valued function into vector form. This paper proposes a tensor-based interpolation method to approximate a matrix-valued function without transforming it into the vector form. The tensor-based technique has the advantage of reducing offline and online computation without sacrificing much accuracy. The proposed method is an extension of the empirical interpolation method (EIM) for tensor bases. This paper presents a necessary theoretical framework to understand the method's functioning and limitations. Our mathematical analysis establishes a key characteristic of the proposed method: it consistently generates interpolation points in the form of a rectangular grid. This observation underscores a fundamental limitation that applies to any matrix-based approach relying on widely used techniques like EIM or DEIM method. It has also been theoretically shown that the proposed method is equivalent to the DEIM method applied in each direction due to the rectangular grid structure of the interpolation points. The application of the proposed method is shown in the model reduction of the semi-linear matrix differential equation. We have compared the approximation result of our proposed method with the DEIM method used to approximate a vector-valued function. The comparison result shows that the proposed method takes less time, albeit with a minor compromise with accuracy.
\end{abstract}


\section{Introduction}
The complexity of dynamical systems in science and engineering has increased dramatically due to the ever-increasing demand for accuracy. Today’s large-scale dynamical system requires enormous computational power for numerical simulation. Model order reduction (MOR) helps generate a reduced model that accurately represents dynamics of interest and requires less computation time. Application of MOR can be found in many areas, such as electrical grid  \cite{eletricalSmartgid}, biological system \cite{BiologicalSystem}, and CFD-based modeling and control \cite{CFDReduction}. The application of MOR is widespread across various domains, including the electrical grid \cite{eletricalSmartgid}, biological systems \cite{BiologicalSystem}, CFD-based modeling and control \cite{CFDReduction}, and numerous others \cite{ChoiEpistemic,cheung2023data,ChoiStressConstrained,CHOISpaceTime}.

The MOR works well in many areas due to the fact that the solution is attracted to a low dimensional manifold, though it is represented in very high-dimension in full order model (FOM) \cite{DEIM}. Hence, the initial stage of Model Order Reduction (MOR) involves identifying the low-dimensional manifold that approximately contains the solution trajectories. If we regard this low-dimensional space as a linear subspace, several widely recognized techniques can be mentioned, such as rational interpolation \cite{antoulas2020interpolatory}, the reduced basis method \cite{ReducedBasis}, and proper orthogonal decomposition (POD) \cite{PODTurbulent}, \cite{benner2015survey}, among others. POD, initially introduced in the context of turbulent flow \cite{PODTurbulent}, is an empirical method closely linked to principal component analysis (PCA).
Subsequently, the system is projected onto the low-dimensional space usually with galerkin method to reduce the system dimensionality \cite{benner2015survey,antoulas2010interpolatory}. The POD-Galerkin method does effective model reduction for linear and bi-linear systems \cite{DEIM}. In the case of general non-linear systems, the dependence on the dimensionality of the full-order system persists. To alleviate this dependency on the FOM dimension, several methods have been proposed, such as \cite{MPE,GNAT,DEIM,galbally,ChoiAPoints,ChoiAnLP}. The masked projection\cite{galbally}, gauss newton with approximated tensor (GNAT) \cite{GNAT} and missing point estimation \cite{MPE} have used the concept of gappy POD, whereas discrete empirical interpolation method (DEIM) \cite{DEIM} is an interpolation-based method. DEIM is a discrete version of the empirical interpolation method (EIM) \cite{EIM}, which generates interpolation points from a given basis set to approximate the function. This approximated function is then employed in the POD - Galerkin reduced model to eliminate the reliance on the FOM dimension. Different up-gradation and variations of DEIM have been proposed to perform better in different applications, resulting in Localised DEIM \cite{LDEIM}, Randomized DEIM \cite{RDEIM}, matrix DEIM \cite{MDEIM}, and Q-DEIM \cite{QDEIM}.

All extensions of DEIM work on vector-valued functions, but in some applications, non-linearity may be a matrix-valued function.
For example, non-linearity in the system arises from discretizing
the three-dimensional partial differential equation (PDE). Here, a three-dimensional PDE pertains to an equation with two spatial independent variables and one temporal or parameter independent variable. So, to calculate non-linear terms efficiently in POD reduced system, an interpolation-based approximation of the large matrix-valued function is required. This problem can also be solved by applying DEIM after vectorizing the matrix-valued function into a vector-valued function. However, this is computationally inefficient for functions with a large matrix as output. In this paper, we present a theoretical foundation for the approximation of matrix-valued functions, which is an extension of EIM (Empirical Interpolation Method). Although a few researchers have worked in this direction \cite{MatrixDEIM,GK2}, a proper theoretical framework is missing to the best of our knowledge. The proposed framework is necessary to answer a lot of questions related to the functioning and limitations of the method. There are two important theoretical questions: The first is how a discrete matrix version of EIM relates to matrix DEIM proposed in \cite{MatrixDEIM}. We have mathematically proved the equivalence. The second question is more related to the interpolation points. Generally, we expect the points selected by any matrix version of EIM to be the sampling points anywhere on the grid. However, we have observed that this is not the case. Mathematically, we have shown in the paper that for tensor basis or rank one matrix basis, the proposed method selects interpolation points always on a rectangular grid. For instance, if we want to approximate the matrix-valued function with 24 interpolation points, it selects four rows, and from those selected four rows, it selects six elements such that it forms a rectangular grid. The proposed method always selects an equal number of elements from selected rows. The strategy of removing non-zero entries \cite{wirtz} can be easily adopted in the proposed method. We have also shown the application of the proposed framework in the model reduction of matrix and vector differential equations.  

The rest of the paper is organized as follows: The interpolation-based approximation procedure of the matrix-valued function is explained in Section \ref{section:sec1}, followed by the algorithm to select the interpolation points and its analysis results in Section \ref{section:sec2}. Application of the proposed approximation in model reduction of non-linear dynamical system and numerical simulation of the same are described in Section \ref{section:sec3}.
\subsection{Notations}
 In this section, we have defined a few notations which are used in this work.
An operator $vec$ is used to vectorize a matrix column-wise. Suppose A = $\begin{bmatrix}
a_1 & a_2 & a_3 &\dots& a_n\\
\end{bmatrix}$ is a  matrix, its vectorized form is denoted as $vec(A) =\begin{bmatrix}
a_1^T & a_2^T & a_3^T &\dots& a_n^T\\
\end{bmatrix}^T$ and $vec_{mn}^{-1}$ maps a vector with $mn$ elements into $m\times n$ matrix. Moreover, if a= $\begin{bmatrix} 
a_1^T & a_2^T & a_3^T &.&.& a_n^T\\
\end{bmatrix}^T$ is a vector with $mn$ elements, $vec_{mn}^{-1}$  is equal to the matrix $\begin{bmatrix}
a_1 & a_2 & a_3 &.&.& a_n\\
\end{bmatrix}$ with $m$ rows and $n$ columns.
The symbols $\bigotimes_{kr}$,$\bigotimes_K$, and $\bigotimes$ represent column-wise Khatri-Rao product, Kronecker product, and tensor product, respectively. $e_k$ denotes the $k^{th}$ column of identity matrix. Let $A=[a_{ij}] \in \mathbb{R}^{m \times m}$ be a matrix. Then, \textcolor{blue}{the diagonal extraction operator $vecd$ of A is defined below
\begin{equation}
    vecd(A)=\begin{bmatrix}
        a_{11} & a_{22} & \dots & a_{mm}
    \end{bmatrix}^T
\end{equation}
}

\section{Non-linear matrix approximation using tensors}

\label{section:sec1}
This section describes an approximation procedure for the non-linear matrix $A(x)$. \textcolor{blue}{The conventional matrix flattening method \cite{wirtz,carlberg2011model} is not considered here because it requires more computation (in the offline/online stage) and is unsuitable for large matrices, as mentioned in \cite{ZHANG2005224}.} The proposed approach is inspired by the Empirical Interpolation Method (EIM) \cite{EIM} and the Discrete Empirical Interpolation Method \cite{DEIM}, and develops two-dimensional approximation without matrix flattening. This can be easily extended for higher dimensions.

\textcolor{blue}{Let $U_i \in \mathbb{R}^{n_1 \times n_2}$ for all $i \in \{1,2, \dots, m_1m_2\}$ are linearly independent matrices, where $m_1 \leq n_1,m_2 \leq n_2 \in \mathbb{N}$}. The projection of matrix $A(t) \in \mathbb{R}^{n_1 \times n_2}$ onto the space spanned by $U_i$'s is given by:
\begin{equation}  \label{eq21}
A(t) \approx = \hat{A}(t) = U_1c_1(t)+.....+U_{m_1m_2}c_{m_1m_2}(t)
\end{equation}
where, $c_i(t) \in \mathbb{R}$, {$U_i$}'s are the POD bases of the matrix $A(t)$ and $\hat{A}$(t) represents the approximated matrix on $m_1m_2$ number of POD bases. Moreover \eqref{eq21} can be expressed by defining projection operator $\mathcal{U}:\mathbb{R}^{m_1\times m_2}\rightarrow\mathbb{R}^{n_1\times n_2}$ as,
\begin{equation}  \label{eq22}
\hat{A}(t) = \mathcal{U}c(t)
\end{equation}
where,
\begin{equation}  \label{eq23}
c(t)=vec_{m_1m_2}^{-1} \begin{bmatrix}
c_1(t) & c_2(t) &.&.& c_{m_1m_2}(t)\\
\end{bmatrix}
\end{equation}
\begin{equation}  \label{eq24}
\mathcal{U}c(t)=\sum_{i=1}^{m_1m_2} \langle c(t),vec_{m_1m_2}^{-1}(e_i)\rangle U_i
\end{equation}
where, $e_i$ is $i^{th}$ column of identity matrix $I \in \mathbb{R}^{m_1m_2 \times m_1m_2}$.

Since our approximation method is an interpolation-based method, to determine the interpolation points from a given set of sampling points, we utilize the masking operator, defined as follows:
\begin{definition}
 A mask is an operator from $ \mathbb{R}^{n_1\times n_2}$ to $\mathbb{R}^{m_1\times m_2}$. For $X=[x_{ij}]_{(i=1,j=1)}^{(n_1,n_2)}$ , it is defined as

\begin{equation}\label{eq10}
M(X)=
\begin{bmatrix}
x_{i_1j_1} & \dots & x_{i_{m_1}j_{m_1}} \\
x_{i_{m_1+1}j_{m_1+1}}  &\dots & x_{i_{2m_1}j_{2m_1}} \\
.& \dots&.\\
.& \dots&.\\
.& \dots&.\\
x_{i_{(m_2-1)m_1+1}j_{(m_2-1)m_1+1}} & \dots & x_{i_{m_1m_2}j_{m_1m_2}}\\
\end{bmatrix}
\end{equation}

for given indices $i_1,i_2,.....i_{m_1m_2}$ and $j_1,j_2,.....j_{m_1m_2}$.

\end{definition}

 The mask can be easily redefined such that it maps $\mathbb{R}^{n_1\times n_2}$ to any space which is isomorphic to $\mathbb{R}^{m_1\times m_2}$. It is trivial that
 \begin{equation}  \label{eq11}
 vec(MX)=P^Tvec(X)
 \end{equation}
 where,
 \begin{equation*} 
 P=\begin{bmatrix}
 e_{i_1+n_1(j_1-1)} & e_{i_2+n_1(j_2-1)} & .&.&.&e_{i_{m_1m_2}+n_1(j_{m_1m_2)}-1)}
 \end{bmatrix}
 \end{equation*}
where, $e_k \in \mathbb{R}^{n_1n_2}$ are the standard unit vectors.

An approximation of $A(t)$ can be obtained by using the following method, which is an extension of the approximation method used in \cite{DEIM} for matrices. Suppose $A(t) \in \mathbb{R}^{n_1 \times n_2}$, $\hat{A}(t) \in \mathbb{R}^{n_1 \times n_2}$, $t \in \mathbb{R}$ and $M: \mathbb{R}^{n_1\times n_2}$ $\rightarrow$ $\mathbb{R}^{m_1\times m_2} $ denote original matrix, approximated matrix, time, and mask operator, respectively. Equation \eqref{eq22} is an over-determined set due to fewer unknowns than the number of equations.  Given a mask $M$, we use the following consistency condition 
\begin{equation}\label{eq:eq139} 
M(\hat{A}(t)) = M(A(t))
\end{equation}
Moreover, \eqref{eq:eq139} implies that the values of the approximated function and the original function at the interpolation points are identical. i.e.
\begin{equation*} 
M(\hat{A}(t)) = M(\mathcal{U}c(t)) = M(A(t))
\end{equation*}
Here $M\mathcal{U}$ is a linear operator. Assuming $M\mathcal{U}$ as an invertible operator, we have
\begin{equation*} 
c(t) = (M\mathcal{U})^{-1}M(A(t))
\end{equation*}
This means
\begin{equation} \label{eq116}
\hat{A}(t) = \mathcal{U}c(t) = \mathcal{U}(M\mathcal{U})^{-1}M(A(t))
\end{equation}

\textcolor{blue}{\emph{Now, there are three challenges to deal with. Firstly, we have to define the operator $\mathcal{U}$ so that it is computationally efficient. However, all masking operators cannot guarantee the existence of the inverse of the operator $M\mathcal{U}$ (see \eqref{eq116}). Therefore, we define a masking operator to ensure the existence of the inverse of the operator $M\mathcal{U}$, which is our second challenge. Finally, the third challenge is to select the appropriate representation for $M$, that will yield computationally efficient approximations of matrix-valued functions $A(t)$.}
In Section \ref{subsec:CEoFU}, we comprehensively examine the first challenge. Subsequently, in Section \ref{section:sec2}, we address the second challenge by presenting a detailed analysis of the utilized representation of $M$, revealing its computationally inefficient form \eqref{eq56}. The subsequent segments of the paper are dedicated to the derivation of an appropriate representation for $M$. This new representation is engineered to be computationally more efficient compared to the previously employed version (refer Table \ref{tab:table41}).}

\subsection{POD Using Tensor}\label{Sec:POD}


In this section, we introduce a method for deriving the global basis function (tensor Proper Orthogonal Decomposition (POD) basis) of the matrix $A(t)$. This POD basis serves as the foundation for defining the operator $\mathcal{U}$, which offers enhanced computational efficiency during the offline stage when compared to the Matrix Discrete Empirical Interpolation Method (MDEIM) \cite{MDEIM}. Notably, $\mathcal{U}$ operates directly on the matrix without requiring vectorization.

\textcolor{blue}{To construct our POD basis, we employ a tensor-based approach. This tensor basis may not guarantee the optimal k-rank approximation (for more details, refer to section 3.2 of \cite{Kolda}); however, it is particularly advantageous for handling large matrices due to its computational efficiency \cite{ZHANG2005224}.}

Furthermore, we have standardized the data by centering it around zero mean. This normalization enhances the ability to capture variations in the data robustly.

 A tensor is a multi-linear functional. More specifically, a functional T: $\mathbb{R}^{L_1}\times \mathbb{R}^{L_2} . . .\times \mathbb{R}^{L_p}\rightarrow \mathbb{R}$ of order-p defined on a Cartesian product of Hilbert spaces $\mathbb{R}^{L_k}$ (for detail refer \cite{deLathauwer,Kolda,SDM}).
 For the given tensor T : $\mathbb{R}^{L_1}\times \mathbb{R}^{L_2} \times \mathbb{R}^{L_3}\rightarrow \mathbb{R}$, \emph{a modal-rank decomposition} is given as
 \begin{equation}\label{eq12} 
T=\sum_{l_1=1}^{L_1}\sum_{l_2=1}^{L_2}\sum_{l_3=1}^{L_3} \sigma_{l_1l_2l_3}u_1^{(l_1)} \otimes u_2^{(l_2)}\otimes  u_3^{(l_3)}
\end{equation}
where the vectors in the set $\{u_i^{l_i}\}_{1\leq l_i \leq R_i}$ are mutually orthonormal for each $i\in \{1,2,3\}$. The vector $(L_1,L_2,L_3)$ is known as the \emph{modal rank} of the given tensor (for details, refer \cite{deLathauwer}). An approximation of a tensor can be obtained by truncation of the summation in \eqref{eq12} up to $\alpha_i$ instead of $L_i$ for $i \in \{1,2,3\}$. Here, we use higher-order singular value Decomposition (HOSVD) as given in \cite{deLathauwer}.

For our case, we assume that $T= (A(t_1), A(t_2), ..., A(t_N))$, where $A(t_i) \in \mathbb{R}^{n_1 \times n_2}$ is value of matrix-valued function at  time-step $t_i\in \mathbb{R}$. Therefore, T is an order-3 tensor, with time being the third dimension.
In this work, we have not considered the truncation in time direction as it most common way in model reduction but few techniques exist in literature like \cite{CHOISpaceTime} which considers the truncation in time direction also. Using the above-mentioned methods, a $(m_1,m_2,N)$ - modal rank approximation of the tensor T is given as
\begin{equation*} 
T=\sum_{l_1=1}^{m_1}\sum_{l_2=1}^{m_2}\sum_{l_3=1}^{N} \sigma_{l_1l_2l_3}u_1^{(l_1)} \otimes u_2^{(l_2)}\otimes  u_3^{(l_3)}
\end{equation*}
where each $u_1^{(i)} \in \mathbb{R}^{n_1}$, $u_2^{(i)} \in \mathbb{R}^{n_2}$ and $u_3^{(i)} \in \mathbb{R}^N$.

Thus, we can define operator $\mathcal{U}$ as,
\begin{equation}\label{eq13} 
\mathcal{U}x=\sum_{i=1}^{m_1}\sum_{j=1}^{m_2} \langle x, e_1^{(i)} \otimes e_2^{(j)}\rangle u_1^{(i)} \otimes u_2^{(j)}
\end{equation}

\textcolor{blue}{In case of matrices, it is guaranteed that the optimal k-rank approximation can be achieved using Singular Value Decomposition (SVD), ensuring optimality. However, this statement does not hold true when dealing with tensors \cite{Kolda}. As a result, the subspace represented by $m_1m_2$ bases in \eqref{eq13} may not necessarily be optimal.}

An alternate approach to compute the basis is by using the matrix flattening approach as described in \cite{MDEIM}. Assume that columns of a matrix $U_f \in \mathbb{R}^{n_1n_2 \times k}$ represent the POD basis of $vec(A(t))$. 
This $U_f$ can be approximated as a Kronecker product of two matrices as in \cite{Loan93}.
\begin{equation*} 
U_f \approx U_1 \otimes_k U_2
\end{equation*}
\subsection{Computationally Efficient form of $\mathcal{U}$}\label{subsec:CEoFU}
The advantage of using \eqref{eq13} is that $\mathcal{U}$ can be written in a matrix product form, which is computationally efficient. Thus, we propose the following lemma.
\begin{lemma}\label{thm:4.2}
Let T : $\mathbb{R}^{m_1 \times m_2} \rightarrow \mathbb{R}^{n_1 \times n_2}$ be an operator given by
\begin{equation*} 
TX=\sum_{i=1}^{m_1}\sum_{j=1}^{m_2} \langle X, v_1^{(i)} \otimes v_2^{(j)}\rangle u_1^{(i)} \otimes u_2^{(j)}
\end{equation*}
then
\begin{equation*} 
TX=U_1V_2XV_1^TU_2^T
\end{equation*}
where $U_k$ =$\begin{bmatrix}
u_k^{(1)} & u_k^{(2)}  \dots u_{k}^{(r_k)}\\
\end{bmatrix} $ and $V_k$ =$\begin{bmatrix}
v_k^{(1)} & v_k^{(2)} \dots v_{k}^{(r_k)}\\
\end{bmatrix} $ for $k \in \{1,2\}$.
\end{lemma}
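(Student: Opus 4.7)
The plan is to unpack the tensor-product notation into concrete rank-one matrices and then factor out the matrices $U_1,U_2,V_1,V_2$ from the double sum. Specifically, identifying $\mathbb{R}^{n_1}\otimes\mathbb{R}^{n_2}$ with $\mathbb{R}^{n_1\times n_2}$, I will read $u_1^{(i)}\otimes u_2^{(j)}$ as the outer product $u_1^{(i)}(u_2^{(j)})^T$, and likewise $v_1^{(i)}\otimes v_2^{(j)}=v_1^{(i)}(v_2^{(j)})^T$. Under this identification the inner product appearing in the definition of $T$ becomes the Frobenius inner product, so the first step reduces to computing, for any $X\in\mathbb{R}^{m_1\times m_2}$,
\begin{equation*}
\langle X,\,v_1^{(i)}(v_2^{(j)})^T\rangle_F \;=\; \mathrm{tr}\!\bigl(X^T v_1^{(i)}(v_2^{(j)})^T\bigr)\;=\;(v_1^{(i)})^T X\, v_2^{(j)}.
\end{equation*}

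Next I substitute this scalar back into the definition of $TX$. Since the scalar $(v_1^{(i)})^T X v_2^{(j)}$ commutes past the outer-product matrix $u_1^{(i)}(u_2^{(j)})^T$, I can rewrite each summand as $u_1^{(i)}(v_1^{(i)})^T\,X\,v_2^{(j)}(u_2^{(j)})^T$, and then separate the $i$-sum from the $j$-sum:
\begin{equation*}
TX\;=\;\Bigl(\sum_{i=1}^{m_1} u_1^{(i)}(v_1^{(i)})^T\Bigr)\,X\,\Bigl(\sum_{j=1}^{m_2} v_2^{(j)}(u_2^{(j)})^T\Bigr).
\end{equation*}
The final step is simply to recognise each of these sums as a matrix product of the column-concatenations: $\sum_i u_1^{(i)}(v_1^{(i)})^T = U_1 V_1^T$ and $\sum_j v_2^{(j)}(u_2^{(j)})^T = V_2 U_2^T$. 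This yields the compact product form asserted in the lemma (modulo the placement of transposes, which I would verify against the paper's conventions for $\otimes$ and the order of factors in $U_k,V_k$).

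There is no real obstacle here; the only thing requiring care is bookkeeping. I would want to double-check (i) the dimension conventions, namely $u_k^{(\ell)}\in\mathbb{R}^{n_k}$ versus $v_k^{(\ell)}\in\mathbb{R}^{m_k}$, so that $U_1V_1^T$ is of size $n_1\times m_1$ and $V_2U_2^T$ of size $m_2\times n_2$, making the product $U_1V_1^T\cdot X\cdot V_2U_2^T$ land in $\mathbb{R}^{n_1\times n_2}$ as required; and (ii) the outer-product convention for $\otimes$, since swapping $u_1^{(i)}\otimes u_2^{(j)}=u_1^{(i)}(u_2^{(j)})^T$ with the opposite convention would swap the roles of $U_k^T$ and $U_k$ in the final formula. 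Once these conventions are fixed, the derivation is a one-line rearrangement of the double sum.
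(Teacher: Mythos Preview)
Your approach is essentially the same as the paper's: both compute the scalar coefficient $b_{ij}=\langle X,\,v_1^{(i)}\otimes v_2^{(j)}\rangle$ via the Frobenius trace, then assemble the double sum into a matrix product. The paper packages the middle step slightly differently by first writing $\sum_{i,j} b_{ij}\,u_1^{(i)}\otimes u_2^{(j)}=U_1 B U_2^T$ and then identifying $B$, but the content is identical to your direct factorisation.

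Your caution about transpose bookkeeping is well placed: your final expression $U_1V_1^T X\,V_2U_2^T$ differs in index placement from the lemma's stated $U_1V_2 X V_1^T U_2^T$, yet it is your version that is dimensionally consistent and that reproduces Corollary~\ref{thm:thm1} (taking $V_k=I_{m_k}$ gives $U_1XU_2^T$). So the discrepancy lies in the paper's conventions/typos, not in your argument.
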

\begin{proof}
  Given that
   \begin{displaymath}
    \sum_{i=1}^{m_1}\sum_{j=1}^{m_2} b_{ij} u_1^{(i)} \otimes u_2^{(j)} = U_1BU_2^T
  \end{displaymath}
  where B $\in \mathbb{R}^{m_1 \times m_2}$. Also,
\begin{displaymath}
   b_{ij}=\langle X, v_1^{(i)} \otimes v_2^{(j)}\rangle =tr(Xv_1^{(i)}(v_2^{(j)})^T)=(v_2^{(j)})^TXv_1^{(i)}
 \end{displaymath}
 Above expression of $b_{ij}=(v_2^{(j)})^TXv_1^{(i)}$ implies that $B=V_2XV_1^T$.
Now, the result follows.
\end{proof}

Using the result of lemma \ref{thm:4.2}, the following result can be obtained,
\begin{corollary} \label{thm:thm1}
    Let $\mathcal{U}$ be as in \eqref{eq13}. Then,
  \begin{equation}\label{eq321}
      \mathcal{U}X=U_1XU_2^T
  \end{equation}
    where $U_k$=$\begin{bmatrix}
                    u_{1}^{(k)} & u_{2}^{(k)}  & .&.&.&u_{m_k}^{(k)}\\
                \end{bmatrix}$.\\
\end{corollary}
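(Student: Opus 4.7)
The plan is to apply Lemma \ref{thm:4.2} directly, since the operator $\mathcal{U}$ defined in \eqref{eq13} is a special instance of the operator $T$ treated in that lemma. Specifically, in \eqref{eq13} the inner-product test vectors are the standard basis vectors $e_1^{(i)}\in\mathbb{R}^{m_1}$ and $e_2^{(j)}\in\mathbb{R}^{m_2}$, which means the roles of $v_1^{(i)}$ and $v_2^{(j)}$ from Lemma \ref{thm:4.2} are played by standard basis vectors.

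First I would identify the matrices $V_1$ and $V_2$ corresponding to this choice: $V_1 = [e_1^{(1)}\; e_1^{(2)}\;\cdots\; e_1^{(m_1)}] = I_{m_1}$ and similarly $V_2 = I_{m_2}$, since collecting the first $m_k$ standard unit vectors of $\mathbb{R}^{m_k}$ as columns yields the identity matrix. Substituting these into the conclusion of Lemma \ref{thm:4.2}, namely $TX = U_1 V_2 X V_1^T U_2^T$, immediately collapses to $\mathcal{U}X = U_1 I_{m_2} X I_{m_1}^T U_2^T = U_1 X U_2^T$, giving exactly \eqref{eq321}.

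There is no real obstacle here; the only care needed is to check that the dimensions line up so that $X \in \mathbb{R}^{m_1 \times m_2}$ is left-multiplied by $U_1 \in \mathbb{R}^{n_1 \times m_1}$ and right-multiplied by $U_2^T \in \mathbb{R}^{m_2 \times n_2}$, yielding an output in $\mathbb{R}^{n_1 \times n_2}$ as required. Thus the corollary follows as an immediate specialization of the lemma, and the proof can be kept to essentially one line.
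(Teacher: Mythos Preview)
Your proposal is correct and is precisely the paper's own approach: the corollary is stated immediately after Lemma~\ref{thm:4.2} with the remark that it follows from that lemma, and your observation that taking $v_k^{(i)}=e_k^{(i)}$ forces $V_1,V_2$ to be identity matrices is exactly the intended specialization. There is nothing to add; the one-line argument you describe is all that is required.
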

\subsection{Inverse of the operator $\mathbf{M\mathcal{U}}$}
As discussed in Section \ref{section:sec1}, matrix approximation can be obtained using \eqref{eq116}; a masking operator $M$ and $(MU)^{-1}$ is needed for this purpose. This section mainly emphasizes defining $(MU)^{-1}$ operator. Applying masking operator $M$ on \eqref{eq13} results in
\begin{equation*} 
M(\mathcal{U}X)=\sum_{i=1}^{m_1}\sum_{j=1}^{m_2} \langle x, e_1^{(i)} \otimes e_2^{(j)}\rangle M(u_1^{(i)} \otimes u_2^{(j)})
\end{equation*}
Although $u_1^{(i)} \otimes u_2^{(j)}$ is a rank-1 operator, $M(u_1^{(i)} \otimes u_2^{(j)})$ may not result in rank-1 operator. As a result of this, the masking operator $M$ needs to be selected such that $(M\mathcal{U})^{-1}$ exists. The EIM algorithm helps us in this regard (refer to Section 2 of \cite{EIM}). The rest of the discussion in this section is centered around calculating $(M\mathcal{U})^{-1}$.

\begin{lemma}\label{lem:lem41}
Let $M$ and $\mathcal{U}$ be defined as in \eqref{eq10} and \eqref{eq13}. Then
\begin{equation}\label{eq27}
    vec(M(\mathcal{U}X))=((P_2U_2)^T \otimes_{kr} (P_1U_1)^T )^T vec(X)
\end{equation}
        where
        \[P_1^T=\begin{bmatrix}
                    e_{i_1} & e_{i_2}  & .&.&.&e_{i_{m_1m_2}}\\
                \end{bmatrix}\]
         \[P_2^T=\begin{bmatrix}
                    e_{j_1} & e_{j_2}  & .&.&.&e_{j_{m_1m_2}}\\
                \end{bmatrix}       
                \]
        where, $e_k \in$ $R^n$ is the standard unit vector.
\end{lemma}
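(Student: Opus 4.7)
The plan is to reduce the claim to a standard $vec$--Kronecker identity and then repackage the result as a Khatri-Rao product by inspecting columns.

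First, I would apply Corollary~\ref{thm:thm1} to rewrite $\mathcal{U}X = U_1 X U_2^T$. The classical identity $vec(ABC) = (C^T \otimes_K A)\, vec(B)$ then gives $vec(\mathcal{U}X) = (U_2 \otimes_K U_1)\, vec(X)$, and combining with \eqref{eq11} yields $vec(M(\mathcal{U}X)) = P^T (U_2 \otimes_K U_1)\, vec(X)$. So the lemma reduces to the matrix identity
$$ P^T (U_2 \otimes_K U_1) = \bigl((P_2 U_2)^T \otimes_{kr} (P_1 U_1)^T\bigr)^T. $$

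Next, I would verify this identity one row at a time. From the definition of $P$ in \eqref{eq11}, the $k$-th row of $P^T$ is $e_{i_k + n_1(j_k-1)}^T$, so the $k$-th row of $P^T (U_2 \otimes_K U_1)$ is the $(i_k + n_1(j_k-1))$-th row of $U_2 \otimes_K U_1$. Under the column-major $vec$ convention, this row equals the Kronecker product of the $j_k$-th row of $U_2$ with the $i_k$-th row of $U_1$. On the other side, the $k$-th columns of $(P_2 U_2)^T$ and $(P_1 U_1)^T$ are $U_2^T e_{j_k}$ and $U_1^T e_{i_k}$, respectively, so the definition of the column-wise Khatri-Rao product makes the $k$-th column of $(P_2 U_2)^T \otimes_{kr} (P_1 U_1)^T$ equal to $(U_2^T e_{j_k}) \otimes_K (U_1^T e_{i_k})$. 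Transposing this column gives exactly the row I already computed, so both matrices have matching rows and hence coincide.

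The main obstacle I expect is the index bookkeeping around the column-major convention: the linear index $i_k + n_1(j_k-1)$ has to be correctly translated into the $(j_k,i_k)$ block-row of $U_2 \otimes_K U_1$, which is easy to reverse because the mask is specified by the pair $(i_k, j_k)$ while $vec$ stacks columns first. Once this indexing is pinned down, no additional machinery is needed beyond Corollary~\ref{thm:thm1}, the $vec$ identity, and the definitions of $P$, $P_1$, $P_2$.
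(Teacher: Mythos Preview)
Your proposal is correct and follows essentially the same route as the paper: both start from Corollary~\ref{thm:thm1} to get $vec(\mathcal{U}X)=(U_2\otimes_K U_1)\,vec(X)$, then invoke \eqref{eq11}. The paper simply asserts that the remaining identity $P^T(U_2\otimes_K U_1)=\bigl((P_2U_2)^T\otimes_{kr}(P_1U_1)^T\bigr)^T$ follows from the definition of the Khatri--Rao product, whereas you spell out the row-by-row verification explicitly; the underlying argument is the same.
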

\begin{proof}
 Since $\mathcal{U}X=U_1XU_2^T$, we have that 
 \begin{equation*}
     vec(\mathcal{U}X)= (U_2 \otimes_k U_1) vec(X)
 \end{equation*}
 The rest of the proof follows from \eqref{eq11} and the definition of the Khatri-rao product(refer, e.g., \cite{Kolda}).
\end{proof}
\begin{lemma}\label{lem:lem42}
Let $M$ and $\mathcal{U}$ be as in \eqref{eq10} and \eqref{eq13}. If $(M\mathcal{U})^{-1}$ exist, then 
\begin{equation*}
    vec((M\mathcal{U})^{-1}X)=(((P_2U_2)^T\otimes _{kr}(P_1U_1)^T)^T)^{-1}vec(X)
\end{equation*}
\end{lemma}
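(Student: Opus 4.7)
The plan is to leverage Lemma \ref{lem:lem41} directly, since that result already expresses $M\mathcal{U}$ in vectorized form as multiplication by the matrix $K := \bigl((P_2U_2)^T \otimes_{kr} (P_1U_1)^T\bigr)^T$. Inverting $M\mathcal{U}$ at the operator level then corresponds to inverting $K$ at the matrix level, with the $vec$ isomorphism bridging the two viewpoints.

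First I would restate Lemma \ref{lem:lem41} as the identity $vec(M\mathcal{U}X) = K\,vec(X)$ valid for every $X \in \mathbb{R}^{m_1 \times m_2}$. Next I would note that $vec : \mathbb{R}^{m_1\times m_2} \to \mathbb{R}^{m_1 m_2}$ is a linear bijection, so $M\mathcal{U}$ and $K$ are conjugate linear maps; in particular $M\mathcal{U}$ is invertible on $\mathbb{R}^{m_1 \times m_2}$ if and only if $K$ is invertible on $\mathbb{R}^{m_1 m_2}$. Thus the hypothesis that $(M\mathcal{U})^{-1}$ exists provides exactly what is needed to write $K^{-1}$.

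Then, for arbitrary $X \in \mathbb{R}^{m_1 \times m_2}$, I would set $Y := (M\mathcal{U})^{-1} X$, so that $M\mathcal{U} Y = X$. Applying $vec$ and using Lemma \ref{lem:lem41} yields $K\,vec(Y) = vec(X)$, and multiplying by $K^{-1}$ gives
\begin{equation*}
vec\bigl((M\mathcal{U})^{-1}X\bigr) \;=\; vec(Y) \;=\; K^{-1}\,vec(X) \;=\; \bigl(((P_2U_2)^T \otimes_{kr} (P_1U_1)^T)^T\bigr)^{-1} vec(X),
\end{equation*}
which is the stated identity. There is no real obstacle here; the lemma is essentially a bookkeeping corollary of Lemma \ref{lem:lem41}, and the only subtle point worth remarking on is that the invertibility hypothesis on $M\mathcal{U}$ transfers to invertibility of the Khatri--Rao-type matrix $K$ because $vec$ is an isomorphism between the domain and codomain spaces (both of dimension $m_1 m_2$).
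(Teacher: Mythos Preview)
Your proposal is correct and follows essentially the same route as the paper: both arguments apply Lemma~\ref{lem:lem41} to $(M\mathcal{U})^{-1}X$ (your $Y$) and then invert the resulting matrix equation. Your version is slightly more explicit in noting that invertibility of $M\mathcal{U}$ transfers to invertibility of $K$ via the $vec$ isomorphism, a point the paper leaves implicit.
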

\begin{proof}
  Let, $M_u$=$(((P_2U_2)^T\otimes _{kr}(P_1U_1)^T)^T)^{-1}$. For a given matrix $X$ of appropriate dimension, using Lemma \ref{lem:lem41}, we have
  \begin{equation*}
      vec(X)=vec((M\mathcal{U})(M\mathcal{U})^{-1}X)=M_u vec((M\mathcal{U})^{-1}X)
  \end{equation*}
Hence, the result. 
\end{proof}

\section{Tensor Empirical Interpolation Method(TEIM)}
\label{section:sec2}
This section explains the method for obtaining the mask operator $M$(see \eqref{eq116}) and approximating the matrix-valued function from it. This method is referred as \emph{tensor empirical interpolation method (TEIM)} in the subsequent discussions. The masking operator can be obtained by applying the DEIM method, but it requires the tensor basis $u_i \otimes u_j$ to be transferred into equivalent vector form using the $vec$ operator. However, in order to achieve a memory-efficient implementation without disrupting the tensor product structure, we have updated the EIM algorithm. In this algorithm (TEIM), a nested set of interpolation points are selected such that for all $1 \leq m_1 \leq n_1$ and $1 \leq m_2 \leq n_2$, $(M\mathcal{U})^{-1}$ exists.

\begin{algorithm}[H]
\renewcommand{\thealgorithm}{(TEIM)}
\caption{To obtain the mask function. $r_{ij}$ denotes the $(i,j)$ element of matrix $R$.}\label{alg:TEIM}
\begin{algorithmic}
\State{\textbf{Input} $U$ $=$ $\begin{bmatrix}
u_1 & u_2  \dots u_{m_1}\\
\end{bmatrix} $, $V$ =$\begin{bmatrix}
v_1 & v_2 \dots v_{m_2}\\
\end{bmatrix}$,$P_1=0$ and $P_2=0 $}
\State{\textbf{Output} $P_1$ and $P_2$}
\For{$k=1$, \dots, $m_1$}
    \For{$l=1$, \dots, $m_2$}
       \If{$k=1$ and $l=1$} 
            \State{$[\phi_1^1,\phi_1^2]$=$\operatorname{argmax}_{i,j}u_1v_1^T$}
            \State{$P_1=$[$e_{\phi_1^1}$], $P_2=$[$e_{\phi_1^2}$]}
            \State {$U_1=$[$u_1$], $U_2=$[$v_1$]}
        \Else
             \State $c=\operatorname{inv}((P_1^TU_1).*(P_2^TU_2))((P_1^Tu_k).*(P_2^Tv_l))$
           \State 
                  $R=u_kv_l^T -\sum_{i=1}^{k-1} \sum_{j=1}^{m_2} c((i-1)m_2+j)*u_iv_j^T
                  -\sum_{j=1}^{l-1} c((k-1)m_2+j)*u_kv_j^T$

           \State $(p_{k1},p_{k2})=\operatorname{argmax}_{i,j} \mid r_{ij} \mid$
           \State $P_1$ =$\begin{bmatrix}
                        P_1 & e_{p_{k1}}\\
                    \end{bmatrix} $
           \State $P_2$ =$\begin{bmatrix}
                        P_2 & e_{p_{k2}}\\
                    \end{bmatrix} $
             \State $U_1$ =$\begin{bmatrix}
                    U_1 & u_{k}\\
                    \end{bmatrix} $
           \State $U_2$ =$\begin{bmatrix}
                        U_2 & v_{l}\\
                    \end{bmatrix} $
        \EndIf
    \EndFor
\EndFor \\
\Return $P_1,P_2$
\end{algorithmic}
\end{algorithm}

Using \eqref{eq116}, the approximated $\Tilde{A}(x)$ can be written as 
\begin{equation*}
     \Tilde{A}(t) = \mathcal{U}((M\mathcal{U})^{-1}M(A(t)))
  \end{equation*}
   Where $\mathcal{U}X = U_1XU_2^T$. Therefore 
  \begin{equation}\label{eq14} 
     \Tilde{A}(t) = U_1((M\mathcal{U})^{-1}M(A(t)))U_2^T
  \end{equation} 
The \eqref{eq14} can be written in vectorized form as
\begin{equation*}
    vec(\Tilde{A}(t))= (U_2 \otimes_k U_1)vec((M\mathcal{U})^{-1}M(A(t))
\end{equation*}
Using the result of Lemma \ref{lem:lem41} and \ref{lem:lem42}
\begin{equation}\label{eq56}
\begin{split}
     vec(\Tilde{A}(t))&= (U_2 \otimes_k U_1)(((P_2U_2)^T\otimes _{kr}(P_1U_1)^T)^T)^{-1}vec(M(A(t)))
\end{split}
\end{equation}

\textcolor{blue}{The current representation of masking operator $M$ represented in terms of $P_1$ and $P_2$ ($MX=vec^{-1} \left[(P_2^T \otimes_{kr} P_1^T)vec(X)\right]$) has the following disadvantages.
\begin{itemize}
    \item The expression for the approximation of $A(t)$ as presented in \eqref{eq56} illustrates that despite $A(t)$ being a matrix-valued function, the approximation procedure requires it to be represented in equivalent vector form $vec(M(A(t)))$. Thus, the resulting process becomes akin to approximating vector-valued functions. This outcome doesn't align with our objective, as our motive is to approximate the matrix-valued function without transforming it into a vectorized form.
    \item Additionally, the online computational needed to approximate the equivalent representation, $vec(A(t))$, amounts to $n^2m_1m_2$ using \eqref{eq56}. This stems from the fact that the matrix $(U_2 \otimes_k U_1)(((P_2U_2)^T\otimes _{kr}(P_1U_1)^T)^T)^{-1}$ has dimensions of $n^2 \times m_1m_2$ and can be precomputed offline.
\end{itemize}}
\subsection{Computationally Efficient Form of $M\mathcal{U}$} In the preceding section, we constructed a masking operator using the TEIM algorithm and employed it for approximating $A(t)$. However, the approximation process bears a resemblance to approximating a vector-valued function. In this section, we've deduced an alternate form of the masking operator $M$. Besides being computationally efficient, this improved representation of $M$ also assists our objective of approximating the matrix-valued function without transforming it into a vectorized form.

\begin{lemma}\label{lem:lem521}
Let a $\in \mathbb{R}^{m_1}$, b $\in \mathbb{R}^{m_2}$, and \emph{i},\emph{j} denote the index corresponding to the maximum element of $\lvert a \rvert $ and $\lvert b \rvert $ respectively then the index of maximum element of matrix $\lvert ab^T \rvert $ will be at (i,j).
\end{lemma}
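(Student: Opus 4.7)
The plan is to exploit the separable structure of the rank-one matrix $ab^T$. Since the $(p,q)$ entry of $ab^T$ is simply $a_p b_q$, the corresponding entry of $|ab^T|$ factors as $|a_p|\,|b_q|$. Because both factors are nonnegative, maximizing this product over $(p,q) \in \{1,\dots,m_1\} \times \{1,\dots,m_2\}$ decouples into two independent one-dimensional maximizations: choose $p$ to maximize $|a_p|$ and $q$ to maximize $|b_q|$.

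First I would write the $(p,q)$ entry of $|ab^T|$ explicitly as $|a_p b_q| = |a_p|\,|b_q|$. Then I would note that for any fixed $q$, the value $|a_p|\,|b_q|$ is maximized over $p$ at $p=i$, giving the upper bound $|a_i|\,|b_q|$. Maximizing this residual over $q$ yields $q=j$, so the global maximum of $|ab^T|_{pq}$ is attained at $(i,j)$ with value $|a_i|\,|b_j|$.

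The argument is essentially a one-line observation about separability of nonnegative products, so there is no real obstacle; the only care needed is to state clearly that ties (non-unique argmaxima) do not affect the validity of the claim, since $(i,j)$ is \emph{an} index at which the maximum is attained, consistent with the usage of $\operatorname{argmax}$ elsewhere in the paper. This lemma is what lets the TEIM algorithm use separate one-dimensional $\operatorname{argmax}$ searches on $u_1$ and $v_1$ in the initialization step rather than scanning the full $n_1 \times n_2$ matrix $u_1 v_1^T$, which is the efficiency gain that subsequent results will leverage.
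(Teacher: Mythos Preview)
Your argument is correct: the factorization $|ab^T|_{pq}=|a_p|\,|b_q|$ together with nonnegativity immediately decouples the two-dimensional $\operatorname{argmax}$ into two one-dimensional ones, and your remark about ties is appropriate. The paper in fact states this lemma without proof, treating it as an evident observation, so your write-up is already more explicit than what appears there; there is nothing to compare against beyond noting that your reasoning is exactly the intended one.
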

 Lemma \ref{lem:lem521} is consequential in deriving a new representation for the masking operator and exploring the structure present in  $P_1$ and $P_2$, which is explored in theorem \ref{lem:lem61} as given below.
\begin{theorem}\label{lem:lem61}
Let $P_1 \in \mathbb{R}^{ m_1m_2\times n_1 }$ and $P_2 \in \mathbb{R}^{m_1m_2 \times n_2}$ are obtained from TEIM algorithm then there exist $P_3 \in \mathbb{R}^{m_1 \times n_1}$ and $P_4 \in \mathbb{R}^{m_2 \times n_2}$ such that 
\begin{equation*}
    P_1^T\otimes_{kr}P_2^T= P_3^T \otimes_k P_4^T 
\end{equation*}
and $P_3^T(:,i) = P_1^T(:,i*m_2+1)$ ,$P_4^T(:,j) = P_1^T(:,j)$. where i $\in \{1,2,\dots,m_1\}$ and j $\in \{1,2,\dots,m_2\}$.
\end{theorem}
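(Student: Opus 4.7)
The plan is to prove the theorem by strong induction on the TEIM iteration index $(k,l)$ in lexicographic order, establishing the stronger statement that the interpolation point picked at iteration $(k,l)$ has the separable form $(p_k, q_l)$, where $p_k$ depends only on $k$ and $q_l$ depends only on $l$. Once this rectangular-grid structure is in hand, the Khatri-Rao identity follows at once: since the outer loop over $k$ and the inner loop over $l$ enumerate iterations in lex order, column $(k-1)m_2 + l$ of $P_1^T$ equals $e_{p_k}$ and the corresponding column of $P_2^T$ equals $e_{q_l}$, so the matching column of $P_1^T \otimes_{kr} P_2^T$ is $e_{p_k} \otimes e_{q_l}$, which is precisely the matching column of the Kronecker product $P_3^T \otimes_k P_4^T$ with $P_3^T = [e_{p_1},\ldots,e_{p_{m_1}}]$ and $P_4^T = [e_{q_1},\ldots,e_{q_{m_2}}]$.

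The engine of the induction is the stronger claim that the residual matrix $R$ computed by the algorithm at iteration $(k,l)$ is rank-one, with the factorization $R = \tilde u_k \tilde v_l^T$, where $\tilde u_k$ is the 1D-DEIM residual of $u_k$ against the basis $\{u_1,\ldots,u_{k-1}\}$ at the already-selected rows $\{p_1,\ldots,p_{k-1}\}$ and $\tilde v_l$ is the analogous 1D-DEIM residual of $v_l$ against $\{v_1,\ldots,v_{l-1}\}$ at $\{q_1,\ldots,q_{l-1}\}$. Granted this claim, Lemma \ref{lem:lem521} gives $\operatorname{argmax}_{i,j}|r_{ij}| = \bigl(\operatorname{argmax}_i |\tilde u_k(i)|,\ \operatorname{argmax}_j |\tilde v_l(j)|\bigr)$, and both coordinates are 1D quantities depending only on $k$ and only on $l$, respectively. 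The base case $(k,l)=(1,1)$ is immediate from applying Lemma \ref{lem:lem521} to the rank-one matrix $u_1 v_1^T$, with $\tilde u_1 = u_1$ and $\tilde v_1 = v_1$.

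For the inductive step I would substitute the grid form of $P_1, P_2$ (from the inductive hypothesis) into the Hadamard matrix $(P_1^T U_1).*(P_2^T U_2)$ that the algorithm inverts. When $l = 1$, this matrix is a pure Kronecker product $U^{\mathrm{DEIM}}_{k-1} \otimes_k V^{\mathrm{DEIM}}_{m_2}$, whose inverse factors; applying it to the right-hand side yields a coefficient vector $c = \alpha \otimes e_1$, with $\alpha$ the 1D-DEIM coefficient of $u_k$, and substituting into the formula for $R$ collapses it to $\tilde u_k v_1^T$. The hard part is the case $l > 1$: the matrix to invert is no longer a pure Kronecker product, because the $l-1$ iterations $(k,1),\ldots,(k,l-1)$ already performed within the current row contribute an extra block that breaks the tensor structure. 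I would handle this with a block / Schur-complement inversion, exploiting the fact that for each prior target $v_j$ with $j<l$ the 1D-DEIM coefficient against $\{v_1,\ldots,v_{j-1}\}$ is just the unit vector $e_j$, which keeps the off-diagonal blocks highly structured. After this bookkeeping $c$ again factors (modulo an explicit low-rank correction), and the sum defining $R$ telescopes to $R = \tilde u_k \tilde v_l^T$, closing the induction. This block-inversion step is the main technical obstacle; the conceptual reason it succeeds is that on the separable basis $\{u_i v_j^T\}$ the TEIM procedure decouples into two independent 1D DEIM runs, one in each direction.
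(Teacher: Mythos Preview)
Your proposal is correct and follows essentially the same route as the paper: induction on the iteration index, showing that the residual at step $(k,l)$ is the rank-one matrix $\tilde u_k\,\tilde v_l^T$ formed from the separate 1D-DEIM residuals, with the $l>1$ case handled by a Schur-complement block inversion that reveals the Kronecker structure of the coefficient system. The paper organizes the induction by iteration sets $s_k=\{(k-1)m_2+1,\dots,km_2\}$ rather than by pairs $(k,l)$, but the computations and the key rank-one factorization are the same.
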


\begin{proof}
  Let us assume the following definitions for iteration sets, representing the collection of iterations associated with distinct values of the outer "for" loop (varying values of k) within the algorithm \ref{alg:TEIM}.
\begin{align*}
   s_1&=\{1,2,3,\dots,m_2\}\\
   s_2&=\{m_2+1,m_2+2,m_2+3,\dots,2m_2\} \\
   &\vdots\\
   s_{m_1}&=\{(m_1-1)m_2+1,(m_1-1)m_2+2,\dots,m_1m_2\} 
\end{align*}
This proof establishes that the interpolation points selected by the TEIM algorithm forms a rectangular grid, which has been shown using the concept of \emph{mathematical induction}. We show that, for each iteration within a specified iteration set, the algorithm selects interpolation points such that they share the same row but occupy different columns. Furthermore, the columns chosen for one iteration set remain consistent across all other iteration sets. Notation $\bold{v}_{(\bold{i},j)}$ and $\bold{c}_{(\bold{\ell},j)}$ are used to represent $j^{th}$ element of vector $\bold{v}_{\bold{i}}$ and coefficients vector $\bold{c}_{\bold{\ell}}$ of iteration $\bold{\ell}$, respectively.

For any arbitrary iteration $l$ of iteration set $s_1$ (for $n=1$ in terms of induction), the residual is of the following form
\begin{equation*}
\begin{split}
            R_{\ell} &= \bold{u_1}\otimes_k \bold{v_\ell}^T - \sum_{i=1}^{\ell-1}\bold{c}_{(\bold{\ell},i)}(\bold{u_1}\otimes_k \bold{v_i}^T)\\
               &=\bold{u_1}\otimes_k(\bold{v_\ell} -\sum_{i=1}^{\ell-1}\bold{c}_{(\bold{\ell},i)}\bold{v_i})^T\\
\end{split}                   
\end{equation*}
This residual is a rank one matrix involving the vectors $\bold{u_1}$ and $\bold{v_\ell} -\sum_{i=1}^{\ell-1}\bold{c}_{(\bold{\ell},i)}\bold{v_i}$ where the coefficients vector $\bold{c}_{\bold{\ell}}$ can be computed from the set of linear equations as shown below.

\begin{equation}\label{eq411}
\begin{bmatrix}
\bold{v}_{(\bold{1},j_1)} & \bold{v}_{(\bold{2},j_1)} & \dots & \bold{v}_{(\bold{\ell-1},j_1)}  \\
\bold{v}_{(\bold{1},j_2)} & \bold{v}_{(\bold{2},j_2)} & \dots & \bold{v}_{(\bold{\ell-1},j_2)}\\
\vdots & \vdots & \dots & \vdots \\
\bold{v}_{(\bold{1},j_{\ell-1})} & \bold{v}_{(\bold{2},j_{\ell-1})} & \dots & \bold{v}_{(\bold{\ell-1},j_{\ell-1})}\\
\end{bmatrix}
\begin{bmatrix}
\bold{c}_{(\bold{\ell},1)} \\ \bold{c}_{(\bold{\ell},2)} \\ \vdots \\ \bold{c}_{(\bold{\ell},\ell-1)}
\end{bmatrix}
=
\begin{bmatrix}
\bold{v}_{(\bold{\ell},j_1)} \\ \bold{v}_{(\bold{\ell},j_2)}\\ \vdots \\ \bold{v}_{(\bold{\ell},j_{\ell-1})}
\end{bmatrix}
\end{equation}

\begin{table}[]
\resizebox{\columnwidth}{!}{
\begin{tabular}{|l|l|l|l|}
\hline
\textbf{Iteration Number} & \textbf{Residual Matrix} & \textbf{Left Vector} & \textbf{Right Vector} \\ \hline
1                         &  $\bold{u_1}\bold{v_1}^T$                       &   $\bold{u_1}$                   & $\bold{v_1}$                      \\ \hline
2                          &  $\bold{u_1}(\bold{v_2}-\bold{c}_{(\bold{2},1)}\bold{v_1})^T$                        &  $\bold{u_1}$                    & $\bold{v_2}-\bold{c}_{(\bold{2},1)}\bold{v_1}$                       \\ \hline
3                          &  $\bold{u_1}(\bold{v_3}-\bold{c}_{(\bold{3},1)}\bold{v_1}-\bold{c}_{(\bold{3},2)}\bold{v_2})^T$                        &  $\bold{u_1}$                    & $\bold{v_3}-\bold{c}_{(\bold{3},1)}\bold{v_1}-\bold{c}_{(\bold{3},2)}\bold{v_2}$                      \\ \hline
.                         & .                         & .                     & .                      \\ \hline
.                         & .                         & .                     & .                      \\ \hline
 j                         & $\bold{u_1}(\bold{v_j} -\sum_{i=1}^{j-1}\bold{c}_{(\bold{j},i)}\bold{v_i})^T$                         & $\bold{u_1}$                     & $\bold{v_j}-\sum_{i=1}^{j-1}\bold{c}_{(\bold{j},i)}\bold{v_i})$                      \\ \hline
  .                         & .                         & .                     & .                      \\ \hline
   .                         & .                         & .                     & .                      \\ \hline
$m_2$                             & $\bold{u_1}(\bold{v_{m_2}} -\sum_{i=1}^{m_2-1}\bold{c}_{(\bold{m_2},i)}\bold{v_i})^T$                         & $\bold{u_1}$                     & $\bold{v_{m_2}} -\sum_{i=1}^{m_2-1}\bold{c}_{(\bold{m_2},i)}\bold{v_i}$                      \\ \hline
   
\end{tabular}
}
\caption{Residual matrix for the iterations of Iteration set $s_1$}
\label{tab:table1}
\end{table}
Table 1 reveals that the left vector of residual for every iteration within the set s1 is identical to $\bold{u_1}$. Consequently, employing Lemma \ref{lem:lem521}, if $i_1$ represents the index of the maximum element of $\lvert \bold{u_1} \rvert$, it follows that the maximum element of the residual for each iteration within the iteration set $s_1$ will also correspond to the same($i_1^{th}$) row. Suppose the index of the maximum element of of vector $\lvert\bold{v_\ell} -\sum_{i=1}^{\ell-1}\bold{c}_{(\bold{\ell},i)}\bold{v_i}\rvert$ is $j_\ell$ then columns selected by algorithm \ref{alg:TEIM} are $(j_1,j_2,\dots,j_{m_2})$ for iteration set $s_1$.

\textcolor{blue}{Let a rectangular grid structure of interpolation points extend up to the iteration set $s_{k-1}$. Here, the term $s_{k-1}$ denotes the stage up to the iteration $(k-1)m_2$, where $k-1$ rows have been chosen denoted by $(i_1,i_2,\dots,i_{k-1})$. An identical set of $m_2$ columns is also chosen from each of these selected rows, with their indices represented as $(j_1,j_2,\dots,j_{m_2})$.}
For any arbitrary element $\ell$ of a given iteration set $s_k$ corresponding to iteration number $(k-1)m_2+\ell$, residual is of the below form
 \begin{equation*}
     \begin{split}
         R_{(k-1)m_2+\ell}=  \bold{u_k}\bold{v_\ell}^T - &\sum_{i=1}^{k-1}\sum_{j=1}^{m_2}\bold{c}_{(\bold{(k-1)m_2+\ell)},(i-1)m_2+j)}\bold{u_i}\bold{v_j}^T \\
         &-\sum_{i=1}^{\ell-1}\bold{c}_{(\bold{(k-1)m_2+\ell)},(k-1)m_2+i)}\bold{u_k}\bold{v_i}^T
     \end{split}
 \end{equation*}
Suppose $\bold{G}$,$\bold{H}$,$\bold{w}$,$\bold{q}$, $\bold{r}$ and $\bold{z}$ are defined as below
 \begin{equation*}
     \bold{r}=\begin{bmatrix}
\bold{u}_{(\bold{1},i_k)}\\ \bold{u}_{(\bold{2},i_k)}\\ \vdots \\ \bold{u}_{(\bold{k-1},)i_k}
\end{bmatrix},\bold{z}=\begin{bmatrix}
\bold{v}_{(\bold{l},j_1)}\\ \bold{v}_{(\bold{l},j_2)}\\ \vdots \\ \bold{v}_{(\bold{l},j_{m_2})}
\end{bmatrix}
 \end{equation*}
\begin{equation*}
    \bold{G}=\begin{bmatrix}
\bold{u}_{(\bold{1},i_1)} & \bold{u}_{(\bold{2},i_1)} & \dots & \bold{u}_{(\bold{k-1},i_1)}  \\
\bold{u}_{(\bold{1},i_2)} & \bold{u}_{(\bold{2},i_2)} & \dots & \bold{u}_{(\bold{k-1},i_2)} \\
\vdots & \vdots & \dots & \vdots \\
\bold{u}_{(\bold{1},i_{k-1})} & \bold{u}_{(\bold{2},i_{k-1})} & \dots & \bold{u}_{(\bold{k-1},i_{k-1})} \\
\end{bmatrix}\\
\end{equation*}
\begin{equation*}
\bold{H}=\begin{bmatrix}
\bold{v}_{(\bold{1},j_1)} & \bold{v}_{(\bold{2},j_1)} & \dots & \bold{v}_{(\bold{k-1},j_1)}  \\
\bold{v}_{(\bold{1},j_2)} & \bold{v}_{(\bold{2},j_2)} & \dots & \bold{v}_{(\bold{k-1},j_2)}\\
\vdots & \vdots & \dots & \vdots \\
\bold{v}_{(\bold{1},j_{m_2})} & \bold{v}_{(\bold{2},j_{m_2})} & \dots & \bold{v}_{(\bold{k-1},j_{m_2})}\\
\end{bmatrix}
\end{equation*}

\begin{equation*}
\bold{w}=\begin{bmatrix}
\bold{u}_{(\bold{k},i_1)}\\ \bold{u}_{(\bold{k},i_2)}\\ \vdots \\ \bold{u}_{(\bold{k},i_{k-1})}
\end{bmatrix}\\,
 \bold{q}=\begin{bmatrix}
\bold{v}_{(\bold{k},j_1)} \\ \bold{v}_{(\bold{k},j_2)}\\ \vdots \\ \bold{v}_{(\bold{m_2},j_{m_2})}
\end{bmatrix}\\
\end{equation*}
 Then, the system of linear equation for evaluating the coefficient vector $\bold{c}_{(\bold{(k-1)m_2+\ell)}}$ is given as
\begin{equation}\label{eq:eq339}
\scriptsize{\begin{bmatrix}
\bold{G} \otimes_k \bold{H} & \bold{w} \otimes_k \bold{H}(:, 1:\ell-1)\\
\bold{r}^T \otimes_k \bold{H}(1:\ell-1,:) & \bold{u}_{\bold{k}i_k}\otimes_k \bold{H}(1:\ell-1, 1:\ell-1) \\
\end{bmatrix}
\begin{bmatrix}
\bold{x} \\ \bold{y} 
\end{bmatrix}
=
\begin{bmatrix}
\bold{b_1} \\
\bold{b_2} \\
\end{bmatrix}}
\end{equation}
Here, $\bold{b_1}= \bold{w} \otimes_k \bold{z}$, $\bold{b_2}=\bold{u}_{(\bold{k},i_k)} \otimes_k \bold{z}(1:\ell-1)$, $\bold{x}$ and $\bold{y}$  encompass all the coefficients associated with the second term and third term of the residual $R_{(k-1)m_2+\ell}$ respectively. Owing to the fact that TEIM is an extension of empirical interpolation method \cite{EIM}, invertibility of ($\mathbf{G} \otimes_k \mathbf{H}$) is assured. As a result, equation \eqref{eq:eq339} can be solved using Schur complement \cite{boyd2004convex}.  Using this $\mathbf{y}$ can be expressed as
\begin{align*}
   \bold{y} &=(\bold{u}_{(\bold{k},i_k)}\otimes_k \bold{H}(1:\ell-1,1:\ell-1)-\bold{r}^T\otimes_k \bold{H}(1:\ell-1,:)(\bold{G}^{-1}\otimes_k \bold{H}^{-1})\\
  &\quad\quad*(\bold{w} \otimes_k \bold{H}(:,1:\ell-1))^{-1}\bold{u}_{(\bold{k},i_k)}\otimes_k \bold{z}(1:\ell-1)-\bold{r}^T\otimes_k \bold{H}(1:\ell-1,:) \\
 &\quad\quad*(\bold{G}^{-1}\otimes_k \bold{H}^{-1})(\bold{w} \otimes_k \bold{z})\\
 &=(\bold{u}_{(\bold{k},i_k)}\otimes_k \bold{H}(1:\ell-1,1:\ell-1)-\bold{r}^T\bold{G}^{-1}\bold{w}\otimes_k \bold{H}(1:\ell-1,:)\bold{H}^{-1}\\
  &\quad *\bold{H}(:,1:\ell-1))^{-1} \bold{u}_{(\bold{k},i_k)}\otimes_k \bold{z}(1:\ell-1)-\bold{r}^T\bold{G}^{-1}\bold{w}\otimes_k \bold{H}(1:\ell-1,:)\bold{H}^{-1}\bold{z} 
\end{align*}
Suppose $P_h$ is an operator which select $\ell-1$ rows of $\bold{H}$ as defined below
\begin{equation*}
    P_h\bold{H}=\bold{H}(1:\ell-1,:)
\end{equation*}
Then, $\bold{H}(1:\ell-1,:)\bold{H}^{-1}\bold{H}(:,1:\ell-1))^{-1}=P_h\bold{HH}^{-1}\bold{H}(:,1:\ell-1)=P_h\bold{H}(:,1:\ell-1)=\bold{H}(1:\ell-1,1:\ell-1)$ and similarly $\bold{H}(1:\ell-1,:)\bold{H}^{-1}\bold{z}=\bold{z}(1:\ell-1)$. Using this, $\bold{y}$ can be further reduced as below
\begin{equation}\label{eq412}
\begin{split}
\bold{y} &=(\bold{u}_{(\bold{k},i_k)} -\bold{r}^T\bold{G}^{-1}\bold{w})^{-1}\bold{H}(1:\ell-1,1:\ell-1)^{-1}\\
&\quad \quad *(\bold{u}_{(\bold{k},i_k)}-\bold{r}^T\bold{G}^{-1}\bold{w})\bold{z}(1:\ell-1)\\
  &=\bold{H}(1:\ell-1,1:\ell-1)^{-1}\bold{z}(1:\ell-1)\\
  &=\begin{bmatrix}
\bold{v}_{(\bold{1},j_1)} & \bold{v}_{(\bold{2},j_1)} & \dots & \bold{v}_{(\bold{\ell-1},j_1)}   \\ 
\bold{v}_{(\bold{1},j_2)} & \bold{v}_{(\bold{2},j_2)} & \dots & \bold{v}_{(\bold{\ell-1},j_2)}\\
\vdots & \vdots & \dots & \vdots \\
\bold{v}_{(\bold{1},j_{\ell-1})} & \bold{v}_{(\bold{2},j_{\ell-1})} & \dots & \bold{v}_{(\bold{\ell-1},j_{\ell-1})}\\
\end{bmatrix}^{-1}\begin{bmatrix}
\bold{v}_{(\bold{\ell},j_1)} \\ \bold{v}_{(\bold{\ell},j_2)} \\ \vdots \\ \bold{v}_{(\bold{\ell},j_{\ell-1})}
\end{bmatrix}
\end{split}
\end{equation}
Equation \eqref{eq412} and \eqref{eq411} imply that coefficients involved in residual corresponding to iteration $\ell$ and $y$ are the same. 
Similarly, $\bold{x}$ can be evaluated using Schur's complement as follows
\begin{align}
    x&=(\bold{G}^{-1}\otimes_k \bold{H}^{-1})\bold{w} \otimes_k \bold{z}-(\bold{G}^{-1}\otimes_k \bold{H}^{-1}) \bold{w} \otimes_k \bold{H}(:, 1:\ell-1)\bold{y}\\ 
         &=(\bold{G}^{-1}\bold{w}\otimes_k \bold{H}^{-1}\bold{z})-(\bold{G}^{-1}\bold{w}\otimes_k \bold{H}^{-1}\bold{H}(:, 1:\ell-1)\bold{y})\\ 
         &=(\bold{G}^{-1}\bold{w})\otimes_k (\bold{H}^{-1}\bold{z}-\bold{H}^{-1}\bold{H}(:, 1:\ell-1)\bold{y})\\ 
         &=(\bold{G}^{-1}\bold{w})\otimes_k \left( \begin{bmatrix}
            0 \\ 0\\ \vdots \\0\\1 \\0\\\vdots\\ 0
            \end{bmatrix}-\begin{bmatrix}
            \bold{y}_1 \\ \bold{y}_2\\ \vdots \\\bold{y}_{\ell-1}\\0 \\0\\\vdots\\ 0
            \end{bmatrix}\right)
        =(\bold{G}^{-1}\bold{w}\otimes_k \begin{bmatrix}
        \bold{y}_1 \\ \bold{y}_2\\ \vdots \\\bold{y}_{\ell-1}\\1 \\0\\\vdots\\ 0 
\end{bmatrix} \label{eqn413}
\end{align}
Using \eqref{eq412} and \eqref{eqn413}, residual corresponding to iteration ${(k-1)m_2+\ell}$ can be reduced as follows
\begin{equation*}
     \begin{split}
         R_{(k-1)m_2+\ell}&=  \bold{u_k}\bold{v_\ell}^T - \sum_{i=1}^{k-1}\sum_{j=1}^{\ell}\bold{c}_{(\bold{(k-1)m_2+\ell)},(i-1)m_2+j)}\bold{u_i}\bold{v_j}^T \\
         &\quad \quad-\sum_{i=1}^{\ell-1}\bold{c}_{(\bold{(k-1)m_2+\ell)},(k-1)m_2+i)}\bold{u_k}\bold{v_i}^T\\
                  &= \bold{u_k}(\bold{v_\ell}^T-\sum_{i=1}^{\ell-1}\bold{c}_{(\bold{(k-1)m_2+\ell)},(k-1)m_2+i)}\bold{v_i}^T)\\
                  &-\sum_{j=1}^{k-1}\sum_{i=1}^{\ell-1}
                  (\bold{G}^{-1}\bold{w})_j\bold{u_j}*\\
                  &\quad\quad\quad(\bold{v_\ell}^T-\bold{c}_{(\bold{(k-1)m_2+\ell)},(k-1)m_2+i)}\bold{v_i}^T)\\
                  &=( \bold{u_k}-\sum_{j=1}^{k-1}(\bold{G}^{-1}\bold{w})_j\bold{u_j})\\
                  & *(\bold{v_\ell}^T-\sum_{i=1}^{\ell-1}\bold{c}_{(\bold{(k-1)m_2+\ell)},(k-1)m_2+i)}\bold{v_i}^T)
     \end{split}
 \end{equation*}
As $\bold{c}_{(\bold{(k-1)m_2+\ell},(k-1)m_2+i)} = \bold{c}_{(\bold{\ell},i)}$ and $c_{(\bold{(k-1)m_2+\ell},(j-1)m_2+1)}=(\bold{G^{-1}w})_j$  then, residual can be further simplified as below
\begin{equation}\label{eq511}
     \begin{split}
         R_{(k-1)m_2+l}&=(\bold{u_k}-\sum_{j=1}^{k-1}\bold{c}_{\bold{((j-1)m_2+\ell},(k-1)m_2+1)}\bold{u_j})\\
                        &\quad\quad\quad*(\bold{v_\ell}^T-\sum_{i=1}^{\ell-1}\bold{c}_{(\bold{\ell},i)}\bold{v_i}^T)
     \end{split}
 \end{equation}
\begin{table}[H]
\centering
\resizebox{1\columnwidth}{!}{%
\begin{tabular}{|c|c|c|c|}
\hline
\textbf{Iteration No.} & \textbf{Residual Matrix} & \textbf{Left Vector} & \textbf{Right Vector} \\ \hline
$(k-1)m_2+1$                         &  $(u_k - \sum_{j=1}^{k-1}c_{(j-1)m_2+1}^{(k-1)m_2+1}u_j)v_1^T$                       &   $u_k - \sum_{j=1}^{k-1}c_{(j-1)m_2+1}^{(k-1)m_2+1}u_j$                   & $v_1$                      \\ \hline
$(k-1)m_2+2$                          & $(u_k-\sum_{j=1}^{k-1}(c_{(j-1)m_2+1}^{(k-1)m_2+1}u_j)(v_2-c^2_1v_1)$                        &  $(u_k-\sum_{j=1}^{k-1}(c_{(j-1)m_2+1}^{(k-1)m_2+1}u_j)$                    & $v_2-c^2_1v_1$                       \\ \hline
.                         & .                         & .                     & .                      \\ \hline
.                         & .                         & .                     & .                      \\ \hline
 $(k-1)m_2+l$                         & $(u_k-\sum_{j=1}^{k-1}(c_{(j-1)m_2+1}^{(k-1)m_2+1}u_j)(v_l^T-\sum_{i=1}^{l-1}c_i^lu_kv_i^T)$                         & $(u_k-\sum_{j=1}^{k-1}(c_{(j-1)m_2+1}^{(k-1)m_2+1}u_j)$                     & $v_l-\sum_{i=1}^{j-1}c_iv_i$                      \\ \hline
  .                         & .                         & .                     & .                      \\ \hline
   .                         & .                         & .                     & .                      \\ \hline
$km_2$                             & $(u_k-\sum_{j=1}^{k-1}(c_{(j-1)m_2+1}^{(k-1)m_2+1}u_j)(v_{m_2} -\sum_{i=1}^{m_2-1}c_iv_i)^T$                         &$(u_k-\sum_{j=1}^{k-1}(c_{(j-1)m_2+1}^{(k-1)m_2+1}u_j)$                     & $v_{m_2}-\sum_{i=1}^{m_2-1}c_iv_i$                      \\ \hline
   
\end{tabular}
}
\caption{Residual Matrix Table for Iteration set $s_k$}
\label{tab:table3}
\end{table}

Equation \eqref{eq511} demonstrates that across all iterations within iteration set $s_k$, the interpolation points follows a pattern where all these points are confined within a single row, while the columns align with the previously selected columns $(j_1,j_2,\dots,j_{m_2})$ in other iteration sets. This observation is evident from Tables \ref{tab:table3} and \ref{tab:table1}.

Suppose the below iteration set is defined for constructing $P_2$
\begin{align*}
   t_1&=\{1,m_2+1,2m_2+1,\dots,(m_1-1)m_2+1\}\\ 
   t_2&=\{2,m_2+2,2m_2+2,\dots,(m_1-1)m_2+2\} \\
   &\vdots\\
   t_{m_2}&=\{m_2,2m_2,3m_2,\dots,(m_1-1)m_2+m_2\} 
\end{align*}


Based on Tables $\ref{tab:table1}$ and $\ref{tab:table3}$, it is evident that for iterations within the $s_k$ iteration set, an identical vector is consistently appended to $P_1^T$. Similarly, in the case of iterations within the $t_k$ iteration set, an identical vector is appended to $P_2^T$. Using $P_1$ and $P_2$, \text{if} we construct $P_3$ and $P_4$ as $P_3^T(:, i) = P_1^T(:, i \cdot m_2 + 1)$ and $P_4^T(:, j) = P_2^T(:, j)$, \text{where} $i \in \{1,2,\dots,m_1\}$ and $j \in \{1,2,\dots,m_2\}$, then from the definition of Kronecker and column-wise Khatri-Rao product, ...

\begin{equation*}
    P_1^T\otimes_{kr}P_2^T= P_3^T \otimes_k P_4^T 
\end{equation*} 
\end{proof}

Similar to the TEIM algorithm, the discrete empirical interpolation method \cite{DEIM} is also an extension of the empirical interpolation method \cite{EIM}. The connection between the new masking operator ($P_3,P_4$) and the DEIM masking operator ($P_5,P_6$) obtained from Algorithm \ref{alg:2DDEIM} is established in Theorem \ref{lem63}.  
\begin{scriptsize}
\begin{algorithm}
\caption{2D-DEIM. See \cite{DEIM} or appendix for DEIM algorithm}\label{alg:2DDEIM}
\begin{algorithmic}
\State{\textbf{Input} $U$ $=$ $\begin{bmatrix}
u_1 & u_2  \dots u_{m_1}\\
\end{bmatrix} $, $V$ =$\begin{bmatrix}
v_1 & v_2 \dots v_{m_2}\\
\end{bmatrix}$,$P_5=0$ and $P_6=0 $}
\State{\textbf{Output} $P_5$ and $P_6$}
\State $[P_5,\Phi_1]=DEIM(U)$
\State $[P_6,\Phi_2]=DEIM(V)$\\
\Return $P_5,P_6$
\end{algorithmic}
\end{algorithm}
\end{scriptsize}

The 2D-DEIM algorithm allows for the direct derivation of masking operators $P_3$ and  $P_4$, eliminating the need for the TEIM algorithm, which was employed in \cite{MatrixDEIM} without any accompanying mathematical justification.

\begin{theorem}\label{lem63}
Suppose $P_5$ and $P_6$ are the masking operators obtained by applying the Discrete Empirical Interpolation Algorithm on columns of $U_1$ and $U_2$, respectively. Then $P_3=P_5$ and $P_4=P_6$.
\end{theorem}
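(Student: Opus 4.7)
The plan is to show that the selection rules used by TEIM and by the DEIM loop on $U_1$ coincide at every step, and then conclude $P_3=P_5$ by induction on $k$. The identity $P_4=P_6$ will follow by the symmetric argument applied to the second (rightmost) factor of \eqref{eq511}.

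First, I would invoke the rank-one factorization of the TEIM residual established in equation \eqref{eq511}, which asserts that at iteration $(k-1)m_2+\ell$,
\begin{equation*}
R_{(k-1)m_2+\ell} = \Bigl(\bold{u_k}-\sum_{j=1}^{k-1}\gamma_j\bold{u_j}\Bigr)\Bigl(\bold{v_\ell}-\sum_{i=1}^{\ell-1}\delta_i\bold{v_i}\Bigr)^T,
\end{equation*}
where the coefficients $\gamma_j$ depend only on the data $(\bold{u_1},\dots,\bold{u_k})$ and the row indices $i_1,\dots,i_{k-1}$ previously chosen, and the coefficients $\delta_i$ depend only on the $V$-direction data and the columns $j_1,\dots,j_{\ell-1}$. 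By Lemma \ref{lem:lem521}, the argmax entry of this rank-one matrix splits into the row argmax of the left factor and the column argmax of the right factor. Hence the row index $i_k$ appended to $P_3$ at the start of iteration set $s_k$ is determined entirely by the left factor, independently of $\ell$.

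Second, I would identify the coefficient vector $\gamma=(\gamma_1,\dots,\gamma_{k-1})^T$ with the coefficient vector DEIM computes at step $k$ when applied to $U_1$. From the derivation in Theorem \ref{lem:lem61} (specifically equation \eqref{eqn413} together with the relation $c_{((k-1)m_2+\ell,\,(j-1)m_2+1)}=(\bold{G^{-1}w})_j$), one reads off $\gamma=\bold{G}^{-1}\bold{w}$, where $\bold{G}$ is the $(k-1)\times(k-1)$ submatrix of $[\bold{u_1}\,\dots\,\bold{u_{k-1}}]$ obtained by selecting rows $i_1,\dots,i_{k-1}$, and $\bold{w}$ is the corresponding subvector of $\bold{u_k}$. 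This is precisely the linear system DEIM solves in its $k$-th step, so the left factor of $R_{(k-1)m_2+\ell}$ equals the DEIM residual at step $k$, and their argmax rows coincide.

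The argument is then closed by induction on $k$. The base case is immediate: the first iteration of TEIM (with $k=\ell=1$) picks $\mathrm{argmax}\lvert\bold{u_1}\rvert$, which is exactly the first DEIM index. Inductively, if the first $k-1$ row indices selected by TEIM agree with those selected by DEIM, then the matrix $\bold{G}$ and vector $\bold{w}$ assembled inside TEIM coincide with those assembled inside DEIM, hence the $k$-th selected rows coincide. This yields $P_3=P_5$, and the symmetric argument based on the right factor of \eqref{eq511} (using the iteration sets $t_1,\dots,t_{m_2}$ in place of $s_1,\dots,s_{m_1}$) yields $P_4=P_6$. The main obstacle is essentially bookkeeping: translating the multi-index TEIM coefficients $\bold{c}_{((k-1)m_2+\ell,\,(j-1)m_2+1)}$ into the DEIM coefficients and verifying that the left-factor coefficients $\gamma_j$ are truly independent of the inner index $\ell$, so that the same row index is produced at every iteration of $s_k$ and the identification with DEIM is well-defined.
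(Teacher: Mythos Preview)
Your proposal is correct and follows essentially the same approach as the paper: both invoke the rank-one factorization \eqref{eq511}, split the argmax via Lemma \ref{lem:lem521}, and identify the left and right factors with the DEIM residuals on $U_1$ and $U_2$ respectively. Your version is in fact more explicit than the paper's own proof, which simply displays the factored TEIM residual alongside the DEIM residual and leaves the coefficient matching and induction implicit; your careful identification $\gamma=\bold{G}^{-1}\bold{w}$ and the inductive bookkeeping fill in precisely what the paper takes for granted.
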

\begin{proof}
    As per equation \eqref{eq511},  the residual conforms to the following structure for any given iteration set $s_k$. Within this iteration set $s_k$, the same structure holds true for any arbitrary element $l$.
\begin{equation*}
     \begin{split}
        max(\lvert R_{(k-1)m_2+l} \rvert)&=max(\lvert \bold{u_k}-\sum_{j=1}^{k-1}\bold{c}_{\bold{((j-1)m_2+\ell},(k-1)m_2+1)}\bold{u_j} \rvert)\\
        & \quad \quad *max(\lvert\bold{v_\ell}^T-\sum_{i=1}^{\ell-1}\bold{c}_{(\bold{\ell},i)}\bold{v_i}^T \rvert)\\
                            &= max(r_1)max(r_2)
     \end{split}
 \end{equation*}
\begin{equation*}
\text{Where } r_1 = \mathbf{u_k} - \sum_{j=1}^{k-1} \mathbf{c}_{\left( (j-1)m_2 + \ell , (k-1)m_2 + 1 \right)} \mathbf{u_j} 
\text{ and } r_2 = \mathbf{v_\ell}^T - \sum_{i=1}^{\ell-1} \mathbf{c}_{(\ell, i)} \mathbf{v_i}^T
\end{equation*}

For any arbitrary iteration k and $\ell$, residual is of the below form in the DEIM algorithm for the columns of $U_1$ and $U_2$, respectively 
  \begin{equation*}
     \begin{split}
         R_{k}&=(\bold{u_k}-\sum_{j=1}^{k-1}\bold{c}_{(\bold{k},j)}\bold{u_j})
     \end{split}
 \end{equation*}
 \begin{equation*}
     \begin{split}
         R_{l}&=(\bold{v_\ell}-\sum_{j=1}^{\ell-1}\bold{c}_{(\bold{\ell},j)}\bold{v_j})
     \end{split}
 \end{equation*}
\end{proof}
Theorem \ref{lem63} demonstrates that the new representation of masking operator $(P_3,P_4)$ can be directly derived using Algorithm \ref{alg:2DDEIM} bypassing the need of TEIM algorithm \ref{alg:TEIM}. The results of Theorems \ref{lem63} and \ref{lem:lem61} are useful in achieving the objective of approximating the matrix-valued function without transforming it into vectorized form.
The following portion of this section outlines the same. In order to achieve this, we have defined a permutation operator, as required in Theorem \ref{lem:lem62}.
\begin{definition}\label{def:def2}
A permutation matrix $P_{mn}$ is square $mn \times mn$ matrix partitioned in $m \times n$ sub-matrices such that $ij-th$ sub-matrix has a $1$ at the $ji-th$ position and zero elsewhere\cite{KronStat}
\begin{equation*}
    P_{mn}=\sum_{i=1}^m \sum_{j=1}^n E_{ij}\otimes_kE_{ij}^T
\end{equation*}
Where $E_{ij}=e_ie_j^T=e_i\otimes_ke_j^T$ is the elementary matrix of order $m \times n$, and $e_i(e_j)$ is a column vector with unity in the $i^{th} (j^{th})$ position and zeros elsewhere of order $m \times 1 (n \times 1)$.
\end{definition}
\begin{theorem}\label{lem:lem62}
Assume the equation \eqref{eq27} of lemma \ref{lem:lem41} is given and $P=P_{n_1n_2}$ as per Definition \ref{def:def2}  then  
\begin{equation*}
     vec(M(\mathcal{U}X)) = (P^T(P_6U_2) \otimes_k (P_5U_1))vecX\\
\end{equation*}
and,
\begin{equation*}
     vec((M\mathcal{U})^{-1}X)) = ((P_6U_2)^{-1} \otimes_k (P_5U_1)^{-1})P_{m_2m_1}vecX\\
\end{equation*}
 where, $P_5$ and $P_6$ are the mask operators obtained from algorithm \ref{alg:2DDEIM}.
\end{theorem}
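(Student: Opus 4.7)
My plan is to start from the Khatri-Rao expression in Lemma \ref{lem:lem41} and transform it into a Kronecker form by separating the $U_i$-factors from the selection matrices, collapsing the Khatri-Rao of selection matrices into a Kronecker product via Theorems \ref{lem:lem61} and \ref{lem63}, and handling the order swap between the two Khatri-Rao factors with the commutation matrix from Definition \ref{def:def2}.

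The first step is the distributive identity for the column-wise Khatri-Rao product: the $k$-th column of $(AX)\otimes_{kr}(BY)$ is $(Ax_k)\otimes_k(By_k)=(A\otimes_k B)(x_k\otimes_k y_k)$, so $(AX)\otimes_{kr}(BY)=(A\otimes_k B)(X\otimes_{kr}Y)$ whenever $X,Y$ share column count. Applied with $A=U_2^T$, $B=U_1^T$, $X=P_2^T$, $Y=P_1^T$, this yields $(P_2U_2)^T\otimes_{kr}(P_1U_1)^T=(U_2^T\otimes_k U_1^T)(P_2^T\otimes_{kr}P_1^T)$. The second step is to rewrite $P_2^T\otimes_{kr}P_1^T$ using Theorem \ref{lem:lem61}, which gives $P_1^T\otimes_{kr}P_2^T=P_3^T\otimes_k P_4^T$, combined with Theorem \ref{lem63} identifying $P_3=P_5$ and $P_4=P_6$. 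To accommodate the reversed factor order, I apply the commutation matrix $P=P_{n_1n_2}$: the $k$-th column of $P_2^T\otimes_{kr}P_1^T$ is $e_{j_k}\otimes_k e_{i_k}$, and by Definition \ref{def:def2} $P$ sends this to $e_{i_k}\otimes_k e_{j_k}$, so $P(P_2^T\otimes_{kr}P_1^T)=P_1^T\otimes_{kr}P_2^T=P_5^T\otimes_k P_6^T$, i.e., $P_2^T\otimes_{kr}P_1^T=P^T(P_5^T\otimes_k P_6^T)$.

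Substituting back into the expression from Lemma \ref{lem:lem41}, transposing (using $(A\otimes_k B)^T=A^T\otimes_k B^T$ together with $P^T=P^{-1}$ since $P$ is a permutation), and applying the mixed-product rule $(P_5\otimes_k P_6)(U_1\otimes_k U_2)=(P_5U_1)\otimes_k(P_6U_2)$, plus the auxiliary commutation identity $P(U_2\otimes_k U_1)=(U_1\otimes_k U_2)P_{m_2m_1}$ that relates the $n$-side swap to the $m$-side swap, brings the expression into the claimed form of the first equation. For the inverse equation, I would invoke invertibility of $P_5U_1$ and $P_6U_2$ --- guaranteed by the EIM/DEIM selection underlying Algorithm \ref{alg:2DDEIM} --- apply $(A\otimes_k B)^{-1}=A^{-1}\otimes_k B^{-1}$, and use that the commutation matrix satisfies $P_{m_2m_1}^{-1}=P_{m_2m_1}^T=P_{m_1m_2}$ to recover the stated right-hand side.

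The main obstacle is the bookkeeping of commutation matrices: two distinct swaps appear --- one on the $n$-side between the Khatri-Rao columns and one on the $m$-side between the $U$-factors --- and they live on different Kronecker spaces of sizes $n_1n_2$ and $m_1m_2$ respectively. Tracking these indices carefully so that the stated identity emerges with the correct sizes, and so that $P^T=P_{n_1n_2}^T$ and $P_{m_2m_1}$ land in the right positions on the two sides of the equation, is the delicate part. Once Theorems \ref{lem:lem61} and \ref{lem63} are in hand, the remaining manipulations are routine Kronecker and Khatri-Rao algebra.
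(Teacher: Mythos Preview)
Your proposal is correct and follows essentially the same route as the paper: split the Khatri-Rao expression via the mixed-product identity $(A\otimes_k B)(C\otimes_{kr}D)=AC\otimes_{kr}BD$, use the commutation matrix $P_{n_1n_2}$ to swap the factor order in $P_2^T\otimes_{kr}P_1^T$, invoke Theorems \ref{lem:lem61} and \ref{lem63} to collapse $P_1^T\otimes_{kr}P_2^T$ into $P_5^T\otimes_k P_6^T$, and then apply the standard Kronecker commutation identity $P_{n_1n_2}(A\otimes_k B)P_{m_1m_2}=B\otimes_k A$ together with $(A\otimes_k B)^{-1}=A^{-1}\otimes_k B^{-1}$ for the inverse statement. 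The only cosmetic difference is where you place the commutation swap (you move it onto the $U$-block via $P(U_2\otimes_k U_1)=(U_1\otimes_k U_2)P_{m_2m_1}$, whereas the paper moves it onto the $P_3,P_4$ block), but this is the same identity applied at a different step and leads to the same conclusion.
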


\begin{proof}
    Using the mixed product property of Kronecker and
Khatri-Rao product \cite{Krprop},
\begin{equation*}
    (A\otimes_kB)(C\otimes_{kr}D)= AC\otimes_{kr}BD
\end{equation*}
\eqref{eq27} can be written as below
\begin{equation}{\label{eq28}}
\begin{split}
    vec(M(\mathcal{U}X))
               &=(U_2^TP_2^T\otimes_{kr} U_1^TP_1^T)^T vecX\\
               &=((U_2^T \otimes_k U_2^T)(P_2^T\otimes_{kr}P_1^T))^TvecX
\end{split}                   
\end{equation}
As $A\otimes_{kr}B$ is the same as $B\otimes_{kr}A$ with few row exchanges for a given $A$ and $B$ of compatible size for Khatri-rao product. Then using the property mentioned in equation $2.14$ of \cite{KronStat}
\begin{equation}{\label{eq29}}
    P_2^T \otimes_{kr} P_1^T = P_{n_1n_2}(P_1^T \otimes_{kr} P_2^T)
\end{equation}
Based on \eqref{eq29}, \eqref{eq28} can be represented as,
\begin{equation}{\label{eq30}}
    vec(M(\mathcal{U}X)) =((U_2^T \otimes_k U_2^T)P_{n_1n_2}(P_1^T\otimes_{kr}P_2^T))^TvecX\\
\end{equation}
Using the Lemma \ref{lem:lem61}, \eqref{eq30} reduces to 
\begin{equation}{\label{eq31}}
    \begin{split}
    vec(M(\mathcal{U}X)&= ((U_2^T \otimes_k U_1^T)P_{n_1n_2}(P_3^T\otimes_{k}P_4^T))^TvecX\\
\end{split}     
\end{equation}
 From equation 3.5, as discussed in \cite{MatrixProducts}, $P_{n_1n_2}(P_3^T\otimes_{k}P_4^T)P_{m_1m_2} $can be written as $ P_4^T\otimes_{k}P_3^T$. And since $P^{-1}_{m_1m_2}=P^T_{m_1m_2}=P_{m_2m_1}$, then \eqref{eq31} can be written as
 \begin{equation}{\label{eq32}}
    \begin{split}
    vec(M(\mathcal{U}X)&= ((U_2^T \otimes_k U_1^T)P_{n_1n_2}(P_3^T\otimes_{k}P_4^T)\\
             & \quad \quad \quad P_{m_1m_2}P_{m_2m_1})^TvecX\\
             &=((U_2^T \otimes_k U_1^T)(P_4^T\otimes_{k}P_3^T)P_{m_2m_1})^TvecX\\
             &=((U_2^TP_4^T \otimes_k U_1^TP_3^T)P)^TvecX\\
             &=(P_{m_2m_1}^T(P_4U_2 \otimes_k P_3U_1))vecX
\end{split}  
\end{equation}
Let $M_{uk} =P_{m_2m_1}^T(P_4U_2) \otimes_k (P_3U_1)$. For a given matrix of appropriate dimension,
\begin{equation*}
    vec(X)=vec((M\mathcal{U})(M\mathcal{U})^{-1}X)=M_{uk}vec((M\mathcal{U})^{-1}X)
\end{equation*}
\begin{equation*}\label{eq415}
    \begin{split}
        M_{uk}vec((M\mathcal{U})^{-1}X)&=vec(X)\\
        vec((M\mathcal{U})^{-1}X)=&M_{uk}^{-1}vec(X)\\
                       =&(P_{m_2m_1}^T(P_4U_2) \otimes_k (P_3U_1))^{-1}vec(X)\\
                       =&((P_4U_2) \otimes_k (P_3U_1))^{-1}P_{m_2m_1}vec(X)
    \end{split}
\end{equation*}
The existence of the inverse of $((P_4U_2) \otimes_k (P_3U_1)) \in \mathbb{R}^{m_1m_2 \times m_1m_2}$ implies that the rank of $((P_3U_1) \otimes_k (P_4U_2))$ is $m_1m_2$. Here, $(P_3U_1) \in \mathbb{R}^{m_1 \times m_1}$ and  $(P_4U_2) \in \mathbb{R}^{m_2 \times m_2}$ can achieve a maximum rank of $m_1$ and $m_2$, respectively. The following conclusion is based on the $4^{th}$ property of section $2.3$ and  $3^{rd}$ property of section $2.5$ of \cite{KronStat}.
\begin{equation*}
    vec((M\mathcal{U})^{-1}X)=((P_4U_2)^{-1} \otimes_k (P_3U_1)^{-1})P_{m_2m_1}vec(X)
\end{equation*}
Using the Theorem \ref{lem:lem62}, 
\begin{equation*}
    vec((M\mathcal{U})^{-1}X)=((P_5U_2)^{-1} \otimes_k (P_6U_1)^{-1})P_{m_2m_1}vec(X)
\end{equation*}
\end{proof}

\subsection{Final Form of Approximated Matrix-valued Function}
Using the result of Theorems \ref{lem:lem62} and \eqref{eq14}, $A(t)$ can be approximated as
\begin{equation*}
\begin{split}
     vec(\Tilde{A}(t))&= (U_2 \otimes_k U_1)((P_6U_2)^{-1} \otimes_k (P_5U_1)^{-1})\\
                       &\quad \quad \quad\quad \quad \quad\quad \quad \quad\quad \quad \quad*vec(M(A(t))^T)\\
                     &=(U_2(P_6U_2)^{-1})\otimes_k U_1(P_5U_1)^{-1})\\
                     &\quad \quad \quad\quad \quad \quad\quad \quad \quad *P_{m_2m_1}vec(M(A(t)))
\end{split}
\end{equation*}
Here, $vec(M(A(t)))=vecd(P_1A(t)P_2^T)=(P_2^T \otimes_{kr} P_1^T)^Tvec(A(t))$. Since $P_{n_1n_2}(P_5^T \otimes_{k}P_6^T)P_{m_1m_2}= P_6^T\otimes_{k}P_5^T$ and using the Lemma \ref{lem:lem61}, $P_{m_1m_2}vec(M(A(t))=(P_6 \otimes_k P_5)vec(A(t))$. The resulting expression is reduced to  
\begin{equation}\label{eq416}
    \Tilde{A}(t)=U_1(P_5U_1)^{-1}(P_5A(t)P_6^T)(U_2(P_6U_2)^{-1})^T
\end{equation}
\subsection{Computational Complexity}\label{sec:ComputationalComplexity}
\textcolor{blue}{The online computation required to approximate the matrix-valued function depends on the evaluation direction, either from left to right or from right to left. In the former case, it is $\mathcal{O}(n_1n_2m_2+n_1m_1m_2)$, while in the latter, it is $\mathcal{O}(n_1n_2m_1+n_2m_1m_2)$. The evaluation of $U_1(P_5U_1)^{-1}$ and $(U_2(P_6U_2)^{-1})$ of dimensions  $n_1\times m_1$ and $n_2 \times m_2$, respectively do not take part in online computation as it is time independent and hence can be computed offline. From Table \ref{tab:table41}, it can be observed that the approximation of matrix-valued function using $P_1$ and $P_2$ requires more computation than that obtained using $P_5$ and $P_6$ for $n_1 \ggg m_1$ and $n_2 \ggg m_2$. If $m_1<m_2$, then the preferred direction for evaluating the approximated function $\Tilde{A}(t)$ is from left to right as it requires less computation compared to other way around (i.e., right to left) and vice-versa. Table \ref{tab:table41} also provides an insight into the online computation demands of the DEIM method \cite{DEIM}. The computation is shown corresponding to $m=m_1m_2$ number of interpolation points for ease of comparison with the proposed method. It can be seen that approximating the matrix-valued function using the DEIM method requires more online computation than the TEIM method ($P_5$ and $P_6$) for $n_1 \ggg m_1$ and $n_2 \ggg m_2$.}

\subsection{Comparative Analysis of Accuracy: DEIM vs. TEIM}\label{sec:ATD}
\textcolor{blue}{From the discussion of Section 2, the empirical basis of the matrix-valued function can be computed either through the tensor POD method or by transforming the matrix-valued function into a vector-valued function and subsequently calculating the basis using POD method. In this section, we show how the choice of basis affects the approximation of function obtained through the DEIM or TEIM methods.}
\textcolor{blue}{
Let $Z=\begin{bmatrix}
    Z_1 &Z_2 
\end{bmatrix} \in \mathbb{R}^{n_1 \times n_1}$ and $V=\begin{bmatrix}
    V_1 &V_2 
\end{bmatrix}\in \mathbb{R}^{n_2 \times n_2}$ are the full factor matrices obtained from tensor decomposition. These matrices are used to approximate the $A(t) \in \mathbb{R}^{n_1 \times n_2}$ without any loss. Then, the  coefficient corresponding to an orthogonal projection can be written as
\begin{equation}
    C=Z^TA(t)V
\end{equation}
and, $\hat{A}(t)=ZCV^T$. Moreover, from \eqref{eq416}, the approximation of $A(t)$ can be written as }
\textcolor{blue}{
\begin{equation}\label{eq41116}
\begin{split}
    \Tilde{A}(t)&=Z_1(P_5Z_1)^{-1}(P_5ZCV^TP_6^T)(V_1(P_6V_1)^{-1})^T\\
                &= Z_1(P_5Z_1)^{-1}P_5 \begin{bmatrix}
                    Z_1 & Z_2
                \end{bmatrix}\begin{bmatrix}
                    C_{11} & C_{12}\\
                    C_{21} & C_{22}
                \end{bmatrix}(V_1 (P_6V_1)^{-1} P_6\begin{bmatrix}
                    V_1 & V_2
                \end{bmatrix})^T\\
                &= Z_1(P_5Z_1)^{-1} \begin{bmatrix}
                    P_5Z_1 & P_5Z_2
                \end{bmatrix}\begin{bmatrix}
                    C_{11} & C_{12}\\
                    C_{21} & C_{22}
                \end{bmatrix}(V_1 (P_6V_1)^{-1} \begin{bmatrix}
                    P_6V_1 & P_6V_2
                \end{bmatrix})^T\\
                &=  \begin{bmatrix}
                    Z_1 & Z_1(P_5Z_1)^{-1}(P_5Z_2)
                \end{bmatrix}\begin{bmatrix}
                    C_{11} & C_{12}\\
                    C_{21} & C_{22}
                \end{bmatrix}( \begin{bmatrix}
                    V_1  & V_1 (P_6V_1)^{-1}(P_6V_2)
                \end{bmatrix})^T\\
                &= Z_1C_{11}V_1^T +  Z_1(P_5Z_1)^{-1}(P_5Z_2)C_{21}V_1^T +  Z_1 C_{12}(V_1 (P_6V_1)^{-1}(P_6V_2))^T \\
                & \quad \quad \quad + Z_1(P_5Z_1)^{-1}(P_5Z_2)C_{22}(V_1 (P_6V_1)^{-1}(P_6V_2))^T
\end{split}    
\end{equation}
where $Z_1 \in \mathbb{R}^{n_1 \times m_1}$, $Z_2 \in \mathbb{R}^{n_1 \times n_1-m_1}$, $V_1 \in \mathbb{R}^{n_2 \times m_2}$, $V_2 \in \mathbb{R}^{n_2 \times n_2-m_2}$ and matrix $C$ is partitioned so as to make it compatible for matrix multiplication. } 

\textcolor{blue}{Similarly, if we approximate the matrix-valued function by transforming it into vector form using the DEIM method, resulting in the below form \cite{SecondPaper}
\begin{equation}\label{eq41117}
    vec(\Tilde{A}(t))=\Phi_1\Phi_1^Tvec(A(t)) +\Phi_1(P\Phi_1)^{-1}P\Phi_2\Phi_2^Tvec(A(t))
\end{equation}
where, $\phi=\begin{bmatrix}
    \phi_1 & \phi_2
\end{bmatrix}\in \mathbb{R}^{n_1n_2 \times n_1n_2}$ represents the complete POD basis for $vec(A(t))$ and $P \in \mathbb{R}^{m_1m_2 \times n_1n_2}$.
The best approximation achievable for a given basis is obtained through the orthogonal projection. The approximated function obtained using the masking operator strives to emulate the outcome of orthogonal projection closely. Nevertheless, the choice of basis leads to distinct approximation via orthogonal projection.
Upon examining equations \eqref{eq41116} and \eqref{eq41117}, it can be observed that both the TEIM and DEIM methods involve approximation terms based on orthogonal projection using their respective bases. Specifically, the TEIM method employs orthogonal projection with a tensor basis, while the DEIM method relies on orthogonal projection with the traditional POD basis. Consequently, if the tensor basis offers a more effective approximation subspace compared to the standard POD basis, the TEIM method becomes the preferred choice, and vice-versa.}

\textcolor{blue}{
\begin{table}[]
\resizebox{\columnwidth}{!}{%
\begin{tabular}{|c|c|}
\hline
\textbf{Masking Operator used in approximation} & \textbf{Online Computation}                                                                 \\ \hline
TEIM method ($P_1$ and $P_2$)                                 & $\mathcal{O}(n_1n_2m_2m_1)$                                                              \\ \hline
TEIM method ($P_5$ and $P_6$)                                 & \begin{tabular}[c]{@{}c@{}}$\mathcal{O}(n_1n_2m_2+n_1m_1m_2)$\\            or\\ $\mathcal{O}(n_1n_2m_1+n_2m_1m_2)$\end{tabular} \\ \hline
DEIM method                                                                       & $\mathcal{O}(n_1n_2m_2m_1)$                                            \\ \hline
\end{tabular}%
}
\caption{Computational complexity corresponding to different methods where $n_1,n_2$ signifies the dimension of original matrix ($A(t)$) and $m_1m_2$ denote the number of interpolation points ($m_1<<n_1$ and $m_2 << n_2$) used to approximate the function $A(t)$.}
\label{tab:table41}
\end{table}
}
\section{Model Reduction and Numerical Experimentation}
\label{section:sec3}
In this section, we first show the numerical experimentation result related to the approximation of the matrix-valued function with the proposed method. Later, we show the application of matrix approximation (obtained in \textcolor{blue}{Section \ref{section:sec2}}) in the model reduction of semi-linear matrix differential equations and vector differential equations along with their experimental result.

\subsection{Approximation of matrix-valued function}\label{AMVF}
This section demonstrates the approximation of the non-linear parameterized 2D function. We applied HOSVD to obtain the empirical basis of the space generated by function evaluated on different parameters ($\mu$) or time ($t$), whereas the DEIM technique \cite{DEIM} requires the usage of a standard POD basis. The standard POD basis works only for vector-valued functions. In the case of a matrix-valued function, it must be transformed into a vector form for each parameter, thus making the offline and online processes computationally inefficient. Therefore, the function is approximated using \eqref{eq416}.

\subsubsection{Example 1}
 The example considered here is adapted from \cite{DEIM}. The function f : $\Omega \times D\rightarrow \mathbb{R}$
 \begin{equation*}
     f(x,y,\mu) =\frac{1}{\sqrt{(x-\mu_1)^2+(y-\mu_2)^2+0.1^2}}
  \end{equation*} 
where $(x,y)$ $\in \Omega= [0.1,0.9]^2$ and $\mu = (\mu_1,\mu_2) \in D= [-1,-0.01]^2$. Let $(x_i,y_j)$ be a uniform grid on $\Omega$ for $i=1,\dots,20$ and $j=1,\dots , 20$. Let
 \begin{equation*}
     S(\mu) = [\bold{f}(x_i,y_j,\mu)] \in \mathbb{R}^{20 \times 20}
  \end{equation*} 
  The POD basis is built using the $225$ snapshots created from evenly chosen parameters $\mu_k = (\mu_1^k,\mu_2^k)$ in parameter domain D. The sampled data for different values of $\mu_k$ are arranged in the tensor form such that
  \begin{equation*}
     X(:,:,k) = [\bold{f}(x_i,y_j,\mu_k)] 
  \end{equation*} 
  Here $X \in \mathbb{R}^{20 \times 20 \times 225}$. The first four POD and tensor bases are shown in Figure \ref{fig:fig2}(b) and \ref{fig:fig2}(a), respectively. In order to measure the accuracy, we have used the relative average norm, which is defined as,
 \begin{equation}
      \xi(f)=\frac{1}{n_\mu}\sum_{i=1}^{n_\mu} \frac{\lvert f(\mu_i)-\hat{f}(\mu_i) \rvert_F}{\lvert f(\mu_i)\rvert_F}
\end{equation}
To check the accuracy of the approximation with our method TEIM and DEIM \cite{DEIM}, the function is approximated for 625 equally spaced parameters  $\mu_i = (\mu_1^i,\mu_2^i)$ in domain D, and the results are displayed in Table \ref{tab:table4}.

\begin{table}
\resizebox{\columnwidth}{!}{%
\begin{tabular}{|c|c|c|c|c|c|c|}
\hline
\textbf{m} & \textbf{m1} & \textbf{m2} & \textbf{\begin{tabular}[c]{@{}c@{}}Relative average error \\ with TEIM method\end{tabular}} & \textbf{\begin{tabular}[c]{@{}c@{}}Relative average error\\ with DEIM method\end{tabular}} & \textbf{\begin{tabular}[c]{@{}c@{}}Relative average error with\\ orthogonal projection\\  using tensor bases\end{tabular}} & \textbf{\begin{tabular}[c]{@{}c@{}}Relative average error with\\ orthogonal projection\\  using POD bases\end{tabular}} \\ \hline
4          & 2           & 2           & 0.005697                                                                           & $8.14932*10^{-04}$                                                                      & 0.002514                                                                                                          & $4.08034*10^{-04}$                                                                                                   \\ \hline
6          & 3           & 2           & 0.001995                                                                           & $3.28161*10^{-04}$                                                                      & $9.53900*10^{-04}$                                                                                                     & $1.79717*10^{-04}$                                                                                                   \\ \hline
8          & 4           & 2           & 0.002044                                                                           & $2.65126*10^{-04} $                                                                     & $9.49804*10^{-04}$                                                                                                      & $1.15664*10^{-04}$                                                                                                   \\ \hline
9          & 3           & 3           & $1.65512*10^{-04}$                                                                       & $2.34773*10^{-04}$                                                                      & $1.05302*10^{-04}$                                                                                                      & $1.04193*10^{-04}$                                                                                                   \\ \hline
12         & 4           & 3           & $7.12043*10^{-05}$                                                                       & $2.26993*10^{-04} $                                                                     & $4.19038*10^{-05}$                                                                                                      & $9.58542*10^{-05}$                                                                                                   \\ \hline
15         & 5           & 3           & $6.67833*10^{-05}$                                                                       & $2.27548*10^{-04}$                                                                      & $3.97717*10^{-05}$                                                                                                      & $9.21281*10^{-05}$                                                                                                   \\ \hline
16         & 4           & 4           & $3.43754*10^{-05}$                                                                       & $1.86931*10^{-04}$                                                                      & $1.29211*10^{-05}$                                                                                                      & $9.15026*10^{-05}$                                                                                                   \\ \hline
20         & 4           & 5           & $3.42732*10^{-05}$                                                                      & $1.74713*10^{-04}$                                                                      & $1.27067*10^{-05}$                                                                                                      & $8.87705*10^{-05}$                                                                                                   \\ \hline
24         & 4           & 6           & $3.42973*10^{-05}$                                                                       & $1.73714*10^{-04}$                                                                      & $1.27050*10^{-05}$                                                                                                      & $8.73300*10^{-05}$                                                                                                   \\ \hline
25         & 5           & 5           & $8.84877*10^{-07}$                                                                       & $1.70818*10^{-04}$                                                                      & $6.65309*10^{-07} $                                                                                                     & $8.50820*10^{-05}$                                                                                                   \\ \hline
\end{tabular}%
}
\caption{ Relative average norm for TEIM and DEIM methods with varying interpolation points in Example 1. In this context, $m_1m_2$ represents the number of interpolation points for the TEIM method (where $m_1$ and $m_2$ correspond to the selected number of rows and columns), and $m$ denotes the number of interpolation points for the DEIM method.}
\label{tab:table4}
\end{table}

\textcolor{blue}{As per discussion of Section \ref{sec:ATD}, it can be noted that the accuracy of approximations obtained through the DEIM and our proposed TEIM method relies on orthogonal projection. In addition, as we increase the number of interpolation points the approximation error using TEIM or DEIM method approaches to orthogonal projection error \cite{SecondPaper,DEIM}. The proposed TEIM method operates on a tensor POD basis, while DEIM operates on a standard POD basis. Table \ref{tab:table4} shows that the orthogonal projection error using a tensor POD basis (defined in appendix \ref{sec:OrthogonalProjection} for the orthogonal projection method for tensor basis) is less than using a POD basis for nine or more interpolation points. Consequently, the proposed TEIM approximation performs slightly better than the DEIM method.} For parameter $\mu=(-0.7879,-0.7171)$, the original and approximated function with TEIM method using $25$ interpolation points where  $m_1=5$ and $m_2=5$ is shown in Figure \ref{fig:fig1}. As discussed in Section \ref{sec:ComputationalComplexity}, the computational complexity of approximating the matrix-valued function using the DEIM method is more than the TEIM method, which is illustrated by the average computation time graph in Figure \ref{fig:fig2}(b). The average computation graph is plotted over parameter set $\mu^t$ (where $\mu^t$ contains 625 parameters equally spaced in parameter domain D ) for different interpolation points.


 \begin{figure}
	\centering
\subfigure[HOSVD Basis]{\label{fig:a}\includegraphics[width=\textwidth]{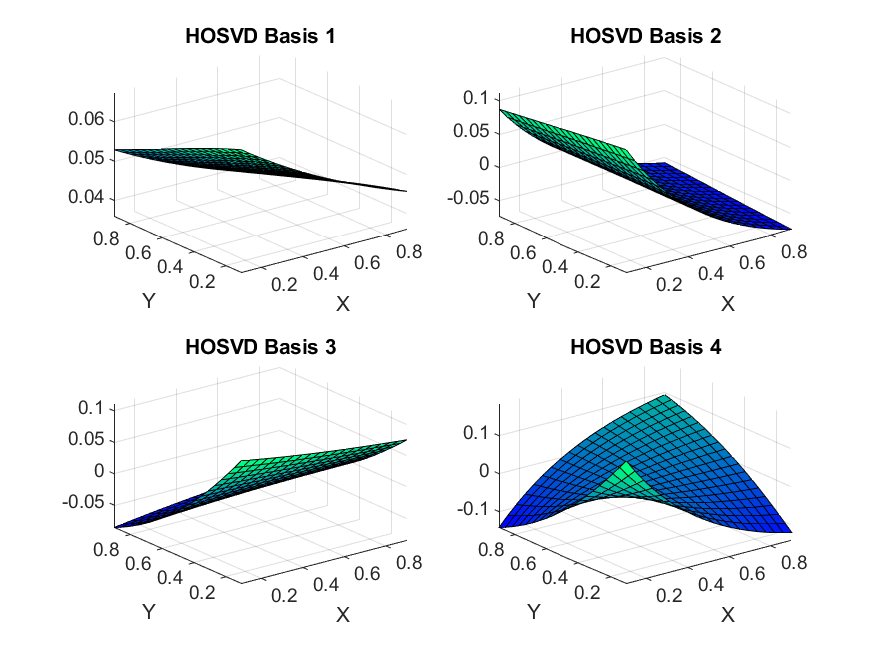}}
\subfigure[POD Basis]{\label{fig:b}\includegraphics[width=\textwidth]{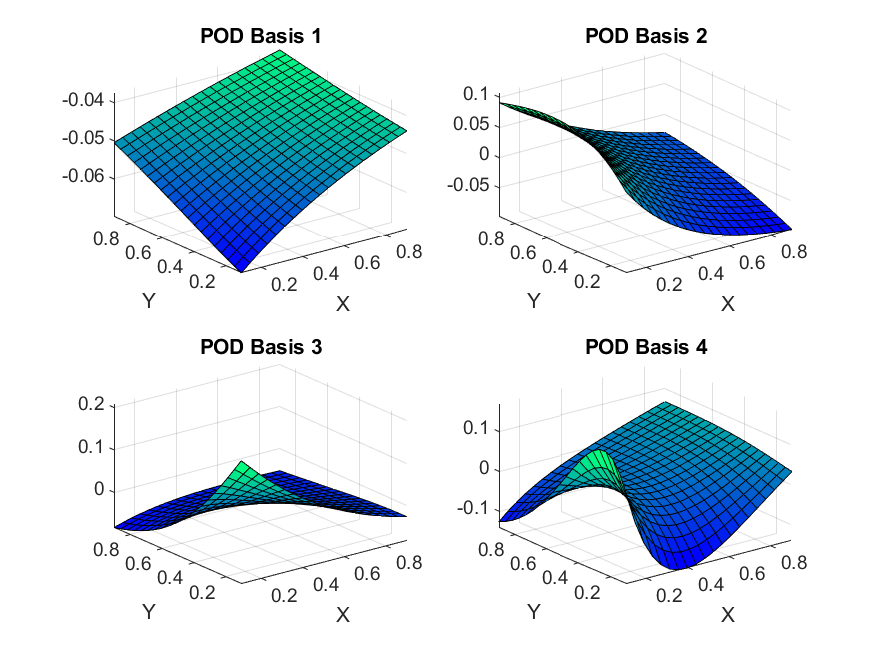}}
\caption{HOSVD Basis and POD Basis}
	\label{fig:fig2}
\end{figure}
\begin{figure}[]
\centerline{\includegraphics[width=\textwidth]{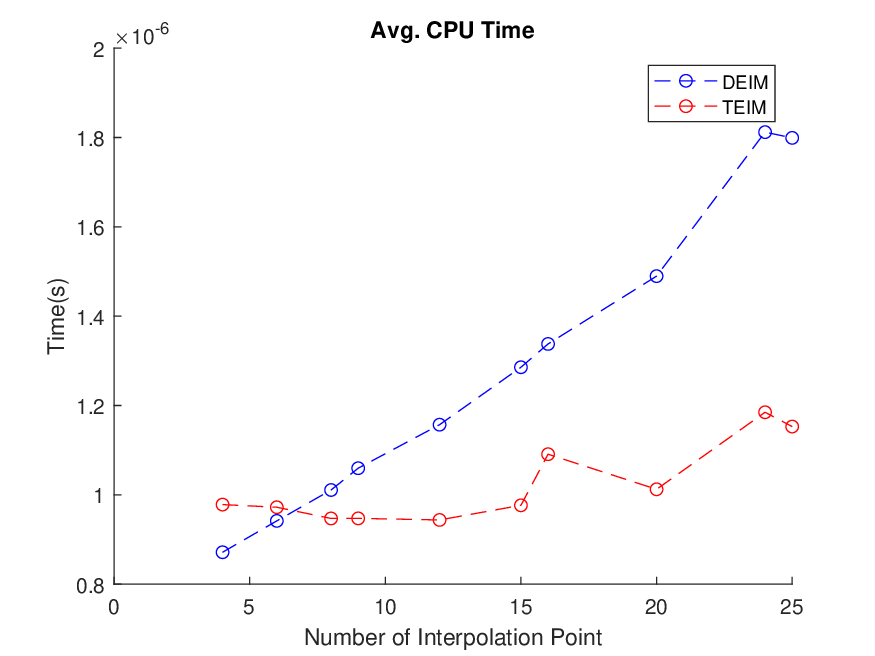}}
\caption{Average Computation Time for a different number of interpolation points.}
 \label{fig:fig1N}
\end{figure}
\begin{figure}[]
\centerline{\includegraphics[width=0.85\textwidth]{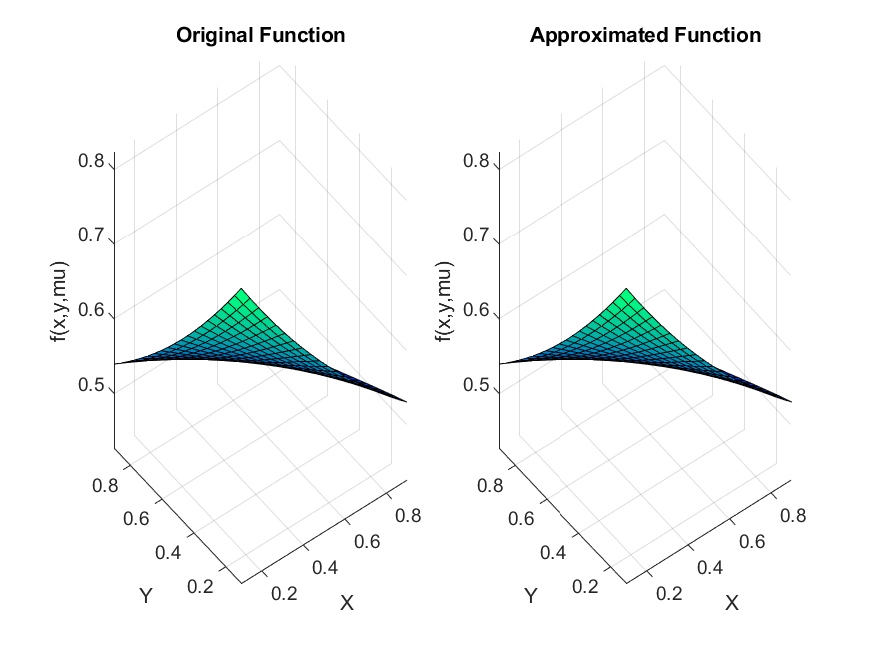}}
\caption{Original and Approximated function for $\mu=(-0.7879,-0.7171)$.}
 \label{fig:fig1}
\end{figure}
    \subsubsection{Example 2}
This example shows that the DEIM approximation \cite{DEIM} method performs better than the TEIM method, if regular POD bases provide better approximation subspace than tensor POD bases.
Suppose $\mathbb{P}$ and $\mathbb{P}_T$ are the orthogonal projection operators for usual and tensor POD bases, respectively. The approximation subspace provided by $\mathbb{P}$ is better than the subspace provided by $\mathbb{P}_T$, if the average error using projection operator $\mathbb{P}$ is less than the projection operator $\mathbb{P}_T$. 

The function f : $\Omega \times [0,T]\rightarrow \mathbb{R}$
 \begin{equation*}
     f(x,y,t) =\frac{1}{\sqrt{(x+y-t)^2+(2x-3t)^2+0.01^2}}
  \end{equation*} 
where $(x,y)$ $\in \Omega= [0,2]^2$ and $T =2$. Let $(x_i,y_j)$ be a uniform grid on $\Omega$ for $i=1,\dots,50$ and $j=1,\dots , 50$. In order to generate the tensor POD bases, the function is evaluated for equally spaced $300$ time-steps and arranged into a tensor as described in Section \ref{section:sec2}. The accuracy result of the DEIM method \cite{DEIM} and the proposed TEIM method is shown in Table \ref{tab:table7} for equally spaced 400 time-steps. In Table \ref{tab:table7}, $m$ denote the number of interpolation points for the DEIM method or the number of POD bases, whereas ($m_1$, $m_2$) denotes the number of interpolation points for the TEIM method or the number of tensor POD basis. Figures \ref{fig:15a}  and \ref{fig:15b} show the first four tensor and usual POD bases, respectively. For parameter, $t=0.5$, the
original and approximated functions with the TEIM method using $49$ interpolation points where $m1=7$ and $m2=7$ are shown in Figure \ref{fig:fig111}.  

\begin{table}[]
\resizebox{\columnwidth}{!}{%
\begin{tabular}{|c|c|c|c|c|c|c|}
\hline
 \textbf{m} & $\mathbf{m_{1}}$ & $\mathbf{m_{2}}$ & \textbf{\begin{tabular}[c]{@{}c@{}}Relative average error \\ with TEIM method\end{tabular}} & \textbf{\begin{tabular}[c]{@{}c@{}}Relative average error\\ with DEIM method\end{tabular}} & \textbf{\begin{tabular}[c]{@{}c@{}}Relative average error with\\ orthogonal projection\\  using tensor bases\end{tabular}} & \textbf{\begin{tabular}[c]{@{}c@{}}Relative average error with\\ orthogonal projection\\  using POD bases\end{tabular}} \\ \hline
9                                    & 3                         & 3                         & 0.3726                                                                                                                                                                                        & 0.2367                                                                                                                                                                                       & 0.2378                                                                                                                                                                                                                                                   & 0.1077                                                                                                                                                                                                                                                \\ \hline
16                                   & 4                         & 4                         & 0.2711                                                                                                                                                                                        & 0.1083                                                                                                                                                                                       & 0.1903                                                                                                                                                                                                                                                   & 0.0510                                                                                                                                                                                                                                                \\ \hline
15                                   & 5                         & 3                         & 0.3293                                                                                                                                                                                        & 0.1123                                                                                                                                                                                       & 0.2346                                                                                                                                                                                                                                                   & 0.0569                                                                                                                                                                                                                                                \\ \hline
15                                   & 3                         & 5                         & 0.3169                                                                                                                                                                                        & 0.1123                                                                                                                                                                                       & 0.1586                                                                                                                                                                                                                                                   & 0.0569                                                                                                                                                                                                                                                \\ \hline
25                                   & 5                         & 5                         & 0.2527                                                                                                                                                                                        & 0.0657                                                                                                                                                                                       & 0.1522                                                                                                                                                                                                                                                   & 0.0227                                                                                                                                                                                                                                                \\ \hline
36                                   & 6                         & 6                         & 0.2169                                                                                                                                                                                        & 0.0581                                                                                                                                                                                       & 0.1220                                                                                                                                                                                                                                                   & 0.0109                                                                                                                                                                                                                                                \\ \hline
49                                   & 7                         & 7                         & 0.1877                                                                                                                                                                                        & 0.0185                                                                                                                                                                                       & 0.0986                                                                                                                                                                                                                                                   & 0.0058                                                                                                                                                                                                                                                \\ \hline
81                                   & 9                         & 9                         & 0.1450                                                                                                                                                                                        & 0.0025                                                                                                                                                                                       & 0.0649                                                                                                                                                                                                                                                   & 0.0012                                                                                                                                                                                                                                                \\ \hline
121                                  & 11                        & 11                        & 0.0802                                                                                                                                                                                        & 0.0010                                                                                                                                                                                       & 0.0425                                                                                                                                                                                                                                                   & 1.59*10-04   \\ \hline                                                                                                                                                                                                                                        
\end{tabular}%
}
 \caption{ Relative average norm for TEIM and DEIM methods with varying interpolation points in Example 2. In this context, $m_1m_2$ represents the number of interpolation points for the TEIM method (where $m_1$ and $m_2$ correspond to the selected number of rows and columns, respectively.), and $m$ denotes the number of interpolation points for the DEIM method.}
 
 \label{tab:table7}
\end{table}

 \begin{figure}
	\centering
\subfigure[Tensor Basis]{\label{fig:15a}\includegraphics[width=\textwidth]{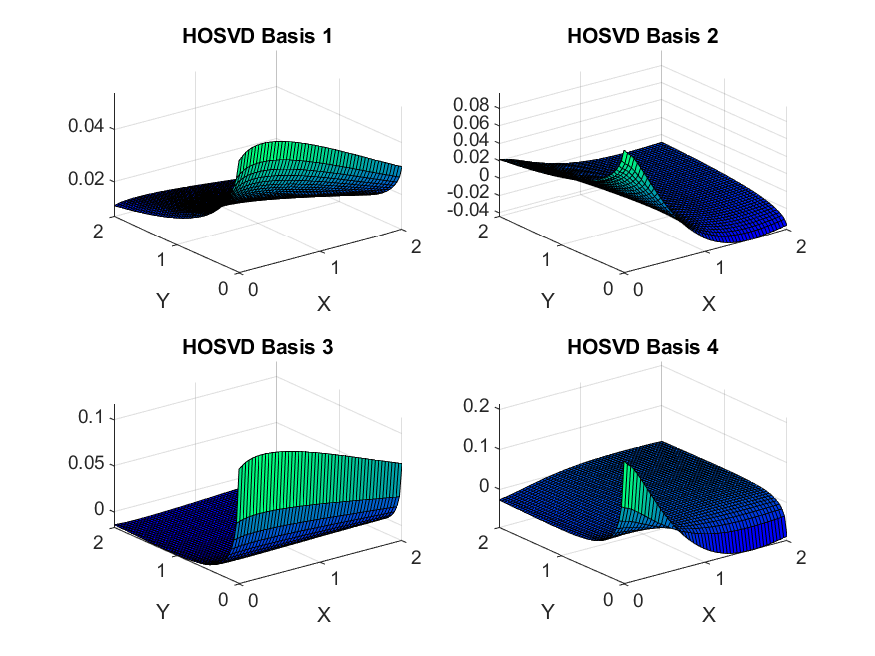}}
\subfigure[POD Basis]{\label{fig:15b}\includegraphics[width=\textwidth]{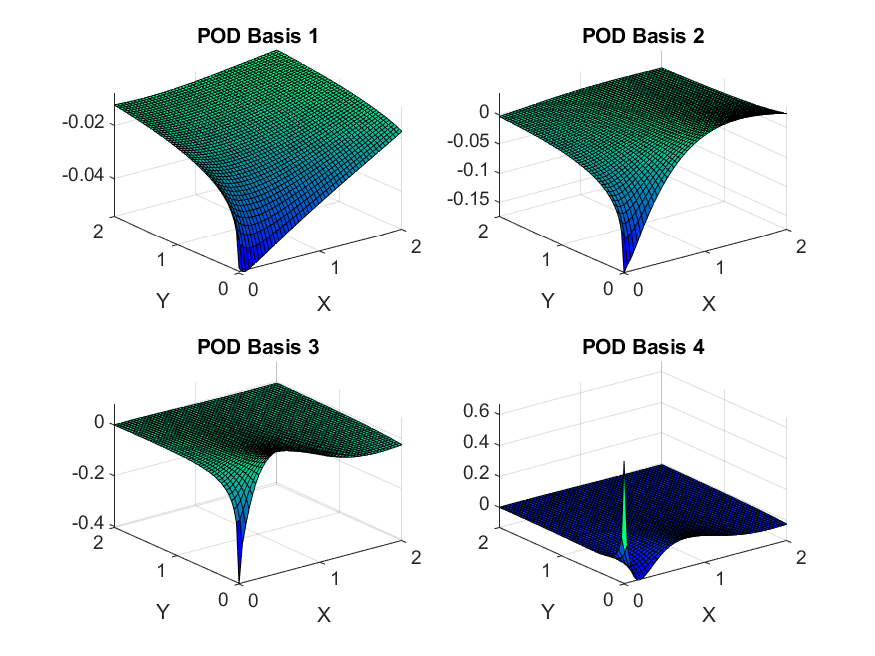}}
\caption{Usual POD basis and tensor basis.}
	\label{fig:fig222}
\end{figure}

\begin{figure}[]
\centerline{\includegraphics[width=0.5\textwidth]{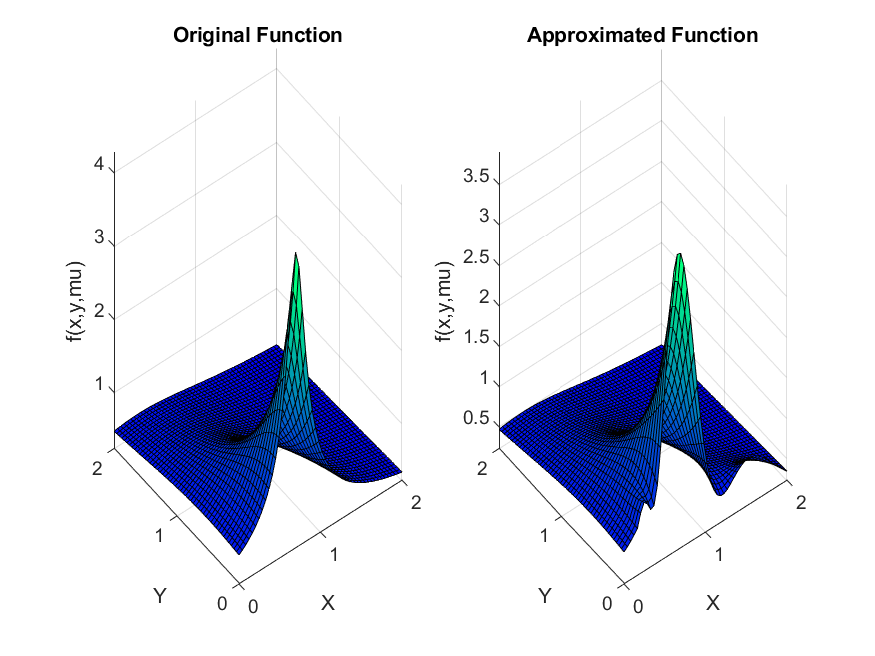}}
\caption{Original and Approximated function for the $t=0.5$.}
 \label{fig:fig111}
\end{figure}

On the basis of accuracy outcomes as observed in Examples 1 and 2, it can be inferred that in cases where tensor POD bases exhibit better approximation subspace compared to conventional POD bases, the TEIM method is preferred over the DEIM method.

 \subsection{TEIM reduced semi-linear matrix differential equation}\label{section:sec3A}
 The semi-linear matrix differential equation is described as \cite{MatrixDEIM}
\begin{equation}\label{eq41}
     \Dot{X}(t)=AX(t)+X(t)B+F(X,t), ~~~~ X(0)=X_0
\end{equation}
 where $X(t) \in S$ ($S$ denote the solution space), $A \in \mathbb{R}^{n_x \times n_x}$, $B \in \mathbb{R}^{n_y \times n_y}$, and $t \in [0,t_f]=T$, given with the appropriate boundary condition. The function $F:S \times T \rightarrow \mathbb{R}^{n_x \times n_y}$ is sufficiently smooth.

 By applying the technique outlined in Section \ref{Sec:POD}, the appropriate projection space can be formed using the tensor decomposition method.
 Consider the bases represented as  $(v^{(1)}_i \otimes v^{(2)}_j)_{i=1}^{k_1},_{j=1}^{k_2}$, which constitute the projection space. Here, $k_1$ and $k_2$ refer to the number of tensor POD modes. With these tensor bases (tensor POD modes), $X(t)$ can be approximated as $V_1\Tilde{X}(t)V_2^T$ where$V_1 \in \mathbb{R}^{n_x \times k_1}$ and $V_2 \in \mathbb{R}^{n_y \times k_2}$ consist of orthonormal columns $v^{(1)}_i$ and $v^{(2)}_j$, respectively.
 In the original model, X(t) is substituted with $V_1\Tilde{X}(t)V_2^T$, and this replacement is followed by applying the Galerkin projection condition, which leads to the reduced system given by
 \begin{equation*}
     \Dot{\Tilde{X}}(t)=V_1^TAV_1\Tilde{X}(t)+\Tilde{X}(t)V_2^TBV_2+V_1^TF(\Tilde{X},t)V_2
\end{equation*}
While the system dimension has been reduced from the original $n_1n_2$ dimensions to $k_1k_2$ dimensions, it still demands $\mathcal{O}(n_1n_2)$ computations for evaluating the non-linear term. We have applied the approximation method discussed in Section \ref{section:sec2} to mitigate this dependency. Thus, the reduced system is as follows
\begin{equation}\label{eqn5667}
\begin{split}
    \Dot{\Tilde{X}}(t)&=A_r\Tilde{X}(t)+\Tilde{X}(t)B_r+V_1^TU_1(P_3U_1)^{-1}(P_3A(t)P_4^T)\\
        & \quad \quad \quad (U_2(P_4U_2)^{-1})^TV_2
\end{split}
\end{equation}
where $A_r=V_1^TAV_1 \in \mathbb{R}^{k_1 \times k_1}$, $B_r=V_2^TBV_2 \in \mathbb{R}^{k_2 \times k_2}$, $(v^{(1)}_i \otimes v^{(2)}_j)_{i=1}^{k_1},_{j=1}^{k_2}$ are the empirical bases where the solution lies approximately at each time-step, and $(u^{(1)}_i \otimes u^{(2)}_j)_{i=1}^{m_1},_{j=1}^{m_2}$ are the bases of space spanned by $F(X,t)$ for different time-steps.

Table \ref{tab:tableN2} shows offline computation comparison for different methods. This offline computation is required for reducing FOM of dimension $n_1n_2$ to the reduced model of dimension $k_1k_2$ using $m_1m_2$ number of interpolation points. It is evident that the reduction with the proposed method requires less offline computation compared to other methods where we have considered that $n_s>n_1$, $n_s>n_2$, but $n_s<n_1n_2$ (this assumption is based on fact that, for very large systems, the reduced model should be built with a minimum number of observations. A similar assumption is considered in \cite{DEIM}).

The online computation needed to solve the reduced system \eqref{eqn5667} with the $4^{th}$ order Runge-Kutte method at each time-step (modified to solve to matrix differential equation mentioned in \eqref{eqn5667}) amounts to $\mathcal{O}(k_1^2k_2+k_2^2k_1+k_1m_1m_2+k_1k_2m_2)$ or $\mathcal{O}(k_1^2k_2+k_2^2k_1+k_2m_1m_2+k_1k_2m_1)$ based on the direction of computing the non-linear term present in the reduced system. To compare the online computational complexity with the POD-DEIM reduced system, we assume that $k=k_1k_2$ (to maintain consistency in the reduced system's dimension using the POD-DEIM and TEIM approaches) and $m=m_1m_2$ (to keep the number of interpolation points same for both methods). The online computation required to solve POD-DEIM reduced system with the $4^{th}$ order Runge kutta method at each time-step is $\mathcal{O}(k_1^2k_2^2+k_1k_1m_1m_2)$, which is higher than the TEIM reduced system. Table \ref{tab:tableN1} illustrates the online computation required for different methods to solve the reduced system with the Runge Kutta method at each time-step.  An important observation is that the mentioned online computation can only be achieved if we solve the \eqref{eqn5667} in the given matrix form. 

In this method of model reduction with TEIM, low dimensional spaces to approximate solution trajectories $X(t)$ and non-linearity present in the dynamical system $F(X(t))$ are constructed through tensor basis. So, this method of model reduction is preferred in cases where tensor POD bases provide a better approximation subspace than the usual POD bases for $X(t)$ and $F(X(t))$. Here, better approximation subspace refers to the subspace where approximation through orthogonal projection leads to less average error or better accuracy compared to other subspaces.

\begin{table}[]
\resizebox{\columnwidth}{!}{%
\begin{tabular}{|c|c|}
\hline
\textbf{Reduction Method}                                                                        & \textbf{Online Computation}                                                                                                                                      \\ \hline
\begin{tabular}[c]{@{}c@{}}TEIM reduced semi-linear matrix\\  differential equation\end{tabular} & \begin{tabular}[c]{@{}c@{}}$\mathcal{O}(k_1^2k_2+k_2^2k_1+k_1m_1m_2+k_1k_2m_2)$\\ or\\ $\mathcal{O}(k_1^2k_2+k_2^2k_1+k_2m_1m_2+k_1k_2m_1)$\end{tabular} \\ \hline
\begin{tabular}[c]{@{}c@{}}TEIM reduced vector \\ differential equation\end{tabular}             & $\mathcal{O}(k_1^2k_2^2+k_1k_2m_1m_2)$                                                                                                                                   \\ \hline
POD-DEIM                                                                                         & $\mathcal{O}(k_1^2k_2^2+k_1k_2m_1m_2)$                                                                                                                                    \\ \hline
\end{tabular}%
}
\caption{Online computation table corresponding to different methods. The mentioned complexity is required to solve the reduced system with the Runge-Kutta method.}
\label{tab:tableN1}
\end{table}

\subsubsection{Numerical simulation of model reduction of semi-linear differential equation}\label{section:sec3A1}
We have considered the example of \emph{2D Allen-Cahn equation} \cite{MatrixDEIM} for model reduction, where the equation of the model is considered as
\begin{equation}
    u_t=\epsilon_1\triangle - \frac{1}{\epsilon_2^2}(u^3-u), ~~~~\omega=[0,2\pi]^2, ~~~t\in [0,5]
\end{equation}

 We have used the same initial condition and parameters, i.e., $u(x,y,0)=0.05\sin x$ $*\cos y$, $\epsilon_1=10^{-2}$ and $\epsilon_2=1$ as given in \cite{MatrixDEIM}. The spatial domain, represented by $\omega$, is discretized into a grid with dimensions of $n_x=30$ and $n_y=30$ over the interval  $[0,2\pi]$ in both the $x$ and $y$ directions
 with the finite-difference method. The resulting ODE system is formed as \eqref{eq41}. The model is simulated for $5s$ with a time-step of $0.025s$. To capture the variations robustly, we have applied the tensor POD method to mean subtracted data. As \eqref{eq41} is a matrix differential equation, we employed the Runge-Kutta method specifically designed for solving such matrix differential equations. To establish a basis for comparison, we applied the Runge-Kutta method to solve the reduced system obtained by employing the POD-DEIM method. Table \ref{tab:table6} shows the accuracy corresponding to different values of interpolation points $(m_1,m_2)$ and tensor POD modes $(k_1,k_2)$.

\begin{table}[]
\resizebox{\columnwidth}{!}{%
\begin{tabular}{|c|c|c|c|c|c|c|c|}
\hline
\textbf{k1} & \textbf{k2} & \textbf{m1} & \textbf{m2} & \textbf{k} & \textbf{m} & \textbf{\begin{tabular}[c]{@{}c@{}}Relative average error \\ using TEIM method\end{tabular}} & \textbf{\begin{tabular}[c]{@{}c@{}}Relative average error \\ using DEIM method\end{tabular}} \\ \hline
5           & 5           & 5           & 5           & 25         & 25         & 0.001747                                                                                     & $1.89*10^{-4}$                                                                                    \\ \hline
5           & 5           & 7           & 7           & 25         & 49         & 0.001216                                                                                     & $1.89*10^{-4}$                                                                                     \\ \hline
6           & 6           & 5           & 5           & 36         & 25         & 0.002087                                                                                     & $1.89*10^{-4}$                                                                                     \\ \hline
3           & 3           & 3           & 3           & 9          & 9          & 0.009978                                                                                     & $7.28*10^{-4}$                                                                                     \\ \hline
4           & 3           & 3           & 3           & 12         & 9          & 0.008602                                                                                     & $2.15*10^{-4}$                                                                                     \\ \hline
3           & 4           & 3           & 3           & 12         & 9          & 0.008580                                                                                     & $2.15*10^{-4}$                                                                                     \\ \hline
2           & 6           & 3           & 3           & 12         & 9          & 0.026687                                                                                     & $2.15*10^{-4}$                                                                                     \\ \hline
7           & 7           & 5           & 5           & 49         & 25         & 0.001904                                                                                     & $1.89*10^{-4}$                                                                                     \\ \hline
7           & 7           & 7           & 7           & 49         & 49         & $2.7486*10^{-4}$                                                                                     & $1.89*10^{-4}$                                                                                     \\ \hline
\end{tabular}%
}
\caption{Relative average error for the reduced system (using the method described in Section \ref{section:sec3A}). $k_1$ and $k_2$ represent the number of tensor POD modes while $m_1,m_2$ represent the number of interpolation points. The shown result is obtained by solving the reduced system with $4^{th}$ order Runge-Kutta method. }
\label{tab:table6}
\end{table}

\subsection{TEIM reduced vector differential equation}\label{section:sec3B}
A semi-linear differential equation can also be reduced by arranging $X(t)$ in vector form. Using the property of Kronecker product, the \eqref{eq41} can be written in vector form as follows  
\begin{equation}\label{eq42}
    vec(\Dot{X}(t))= (I \otimes A + B^T \otimes I)vec(X(t)) +vec(F(X,t))
\end{equation}

Using the vector POD method \cite{Antoulas} and TEIM method, the system \eqref{eq42} can be reduced. Let us assume that the column space of matrix $V$ $\in$ $\mathbb{R}^{n \times k}$ represents the low dimensional space where the state variables can be represented. Then, we can represent 
\begin{equation} \label{eq3}
x \approx V\hat{x} \approx \begin{bmatrix}
v_1 & v_2 & v_3 &.&.& v_k\\
\end{bmatrix}\hat{x}
\end{equation}
where k$<<$n denotes the number of POD mode, and V $\in$ $\mathbb{R}^{n\times k}$ is a full column rank matrix. Since approximation of $F(x,t)$ with TEIM method is $\hat{F}(x,t)=U_1(P_3U_1)^{-1}\\(P_3A(t)P_4^T)*(U_2(P_4U_2)^{-1})^T$ then $vec(\hat{F}(x,t))=(U_2(P_4U_2)^{-1}) \otimes_k (U_1(P_3U_1)^{-1})* vec(P_3A(t)P_4^T)$. Suppose $vec(X(t))=x(t)$ and  $(I \otimes A + B^T \otimes I)=A_1$, then using the Galerkin projection and TEIM method, the reduced system is given by

\begin{equation}
    \begin{split}
        \Dot{\Tilde{x}}(t)&= V^TA_1V\Tilde{x}+V^T(U_2(P_4U_2)^{-1}) \otimes_k (U_1(P_3U_1)^{-1})\\
                      & \quad \quad(P_4 \otimes_k P_3)vec(F(V\Tilde{x}))
    \end{split}
\end{equation} 
An important observation is that the computation needed at each time-step in solving this reduced system amounts to $\mathcal{O}(k^2+km_1^2m_2^2)$, which is equal to the POD-DEIM reduced system if we choose an equal number of interpolation points. The only difference between the POD-DEIM reduced and TEIM reduced systems is the method chosen to approximate the matrix-valued function. So, the preferred method to reduce the system depends on which method approximates the non-linearity better. If the TEIM method approximates it better than the DEIM method, the preferred method is the TEIM method and vice-versa.

\begin{table}[]
\centering
\resizebox{\textwidth}{!}{%
\begin{tabular}{|c|c|c|}
\hline
\textbf{Reduction Method} & \textbf{Procedure} & \textbf{Offline Computation} \\ \hline
 & Tensor POD basis & $\mathcal{O}(n_{1}^{2} n_{2} n_{s} + n_{2}^{2} n_{1} n_{s})$ \\ \cline{2-3} 
\begin{tabular}[c]{@{}c@{}}TEIM reduced semi-linear \\ matrix differential equation\end{tabular} & \begin{tabular}[c]{@{}c@{}}2D-DEIM method derived from TEIM method \\ $m_1m_2$ interpolation indices\end{tabular}  & $\mathcal{O}(m_1^4+m_1n_1)+\mathcal{O}(m_2^4+m_2n_2)$ \\ \cline{2-3}
 & Precompute: $A_r=V_1^TAV_1$ & $\mathcal{O}(n_1^2k_1+n_1k_1^2)$ \\ \cline{2-3} 
 & Precompute: $B_r=V_2^TBV_2$  &  $\mathcal{O}(n_2^2k_2+n_2k_2^2)$ \\ \cline{2-3} 
 & Precompute: $V_1^TU_1(P_3U_1)^{-1}$ & $\mathcal{O}(n_1k_1m_1 + m_1^2n_1 + m_1^3)$ \\ \cline{2-3}
 & Precompute: $V_2^TU_2(P_4U_2)^{-1}$ & $\mathcal{O}(n_2k_2m_2 + m_2^2n_2 + m_2^3)$ \\ \hline
 & Tensor POD basis & $\mathcal{O}(n_1^2n_2n_s+n_2^2n_1n_s)$ \\ \cline{2-3} 
\begin{tabular}[c]{@{}c@{}}TEIM reduced \\ vector differential equation\end{tabular} & \begin{tabular}[c]{@{}c@{}}2D-DEIM method derived from TEIM method \\ $m_1m_2$ interpolation indices\end{tabular}
 & $\mathcal{O}(m_1^4+m_1n_1)+\mathcal{O}(m_2^4+m_2n_2)$ \\ \cline{2-3}
 & Precompute: $V^TAV$ & $\mathcal{O}(n_1^2n_2^2k_1k_2+n_1n_2k_1^2k_2^2)$ \\ \cline{2-3} 
 & Precompute: $=V^T(U_2(P_4U_2)^{-1}) \otimes_k (U_1(P_3U_1)^{-1})$ & $\mathcal{O}(n_1m_1^2+n_2m_2^2 + m_1^3+m_2^3+n_1m_1m_2n_2+k_1k_2n_1n_2m_1m_2)$ \\ \hline
 & SVD: POD basis & $\mathcal{O}(n_1n_2n_s^2)$ \\ \cline{2-3} 
POD\_DEIM method & DEIM algorithm: $m_1m_2$ interpolation indices & $\mathcal{O}(m_1^4m_2^4+m_1m_2n_1n_2)$ \\ \cline{2-3} 
 & Precompute: $V^TAV$ & $\mathcal{O}(n_1^2n_2^2k_1k_2+n_1n_2k_1^2k_2^2)$ \\ \cline{2-3} 
 & Precompute: $V^TU(P^TU)^{-1}$ & $\mathcal{O}(n_1 n_2 k_1 k_2 m_1 m_2+m_1^2 m_2^2 n_1 n_2+m_1^3m_2^3)$ \\ \hline
\end{tabular} }
\caption{Computational complexity for constructing reduced systems using different methods.}
\label{tab:tableN2}
\end{table}

\subsubsection{Numerical simulation of TEIM Reduced vector differential equation}
We have considered the \emph{Allen-Cahn model} with the same parameters and initial condition as described in \ref{section:sec3A1}. The model is discretized in the spatial domain and arranged
in the form of a vector differential equation form. For this vector
differential equation, we have used the standard POD method \cite{Antoulas} for state space approximation, and
to compute the non-linear function efficiently, we have used
the proposed TEIM method. The reduced system from the proposed method and POD-DEIM is simulated using the ODE45. Table \ref{tab:table5} shows the accuracy
of the reduced system with our proposed method and state-of-the-art POD-DEIM method \cite{DEIM} corresponding to different numbers of POD mode and interpolation points. The result shows that the reduced system with POD-DEIM method provides better accuracy than the proposed method with equal online computational complexity.

\begin{table}
\centering
\begin{tabular}{|c|c|c|c|c|c|}
\hline
k & \textbf{m} & \textbf{\begin{tabular}[c]{@{}c@{}}Relative average\\ error for DEIM \\ method\end{tabular}} & \textbf{m1} & \textbf{m2} & \textbf{\begin{tabular}[c]{@{}c@{}}Relative average \\ error for TEIM \\ method\end{tabular}} \\ \hline
7 & 5          & 0.0046                                                                                       & 5           & 5           & 0.0031                                                                                        \\ \hline
7 & 5          & 0.0046                                                                                       & 4           & 4           & 0.0156                                                                                        \\ \hline
7 & 5          & 0.0046                                                                                       & 5           & 4           & 0.0085                                                                                        \\ \hline
7 & 5          & 0.0046                                                                                       & 4           & 5           & 0.0085                                                                                        \\ \hline
7 & 6          & 0.0014                                                                                       & 7           & 6           & 0.0024                                                                                        \\ \hline
7 & 7          & $4.6077*10^{-4}$                                                                & 7           & 7           & $7.7324*10^{-4}$                                                                 \\ \hline
8 & 5          & 0.0051                                                                                       & 5           & 5           & 0.0026                                                                                        \\ \hline
8 & 5          & 0.0051                                                                                       & 4           & 4           & 0.0156                                                                                        \\ \hline
8 & 5          & 0.0051                                                                                       & 5           & 4           & 0.0085                                                                                        \\ \hline
8 & 6          & $5.8386*10^{-4}$                                                                & 7           & 7           & $7.6265*10^{-4}$                                                                 \\ \hline
8 & 7          & $4.2818*10^{-4}$                                                                & 7           & 5           & 0.0018                                                                                        \\ \hline
\end{tabular}
\caption{Relative average norm for a reduced vector-differential system with DEIM and proposed TEIM method (using method described in Section \ref{section:sec3B}). $k$ and $m$ represent the number of POD modes and the number of interpolation points of the DEIM method, respectively, whereas $m_1,m_2$ denotes the number of interpolation points for the TEIM method.}
\label{tab:table5}
\end{table}

\section{Conclusion}
\label{section:sec5}
In this article, we introduced a theory and algorithm for approximating matrix-valued functions, extending the concepts from the Empirical Interpolation Method (EIM) and Discrete Empirical Interpolation Method (DEIM) to tensor bases. A deeper analysis reveals that our proposed algorithm generates interpolation points arranged in a rectangular grid. Consequently, the resulting approximation is equivalent to applying the two-sided DEIM method.

The advantage of employing a two-sided DEIM approach or a rectangular grid is the significantly enhanced computational efficiency, which is the highlight of our method. This efficiency improvement is evident in both the offline and online stages of computation, making our approach more efficient compared to the conventional DEIM technique for matrix-valued function approximation and model reduction. We presented a tabular representation of the results to compare computational performance.

Through two illustrative examples of matrix-valued functions, we initially demonstrated that tensor bases may yield a superior approximation subspace to conventional POD bases. This observation indirectly suggests that the Tensor Empirical Interpolation Method (TEIM) can offer enhanced accuracy relative to the DEIM method in such scenarios. Subsequently, we showcased the application of our proposed method in model reduction through two distinct approaches. Notably, the proposed method requires fewer computational resources but provides a slightly lower level of accuracy when compared to the DEIM method for the given model. This discrepancy arises because conventional POD bases generally offer a more effective subspace than tensor bases.

However, it is essential to recognize that the presented method has the potential to deliver superior accuracy along with reduced computational resources. This advantage becomes particularly apparent in dynamical systems where tensor bases outperform traditional POD bases in terms of defining a suitable approximation subspace.

Although this study is exclusively for matrix-valued functions, it can be extended to tensor-valued ones.

\appendix
\section{DEIM Algorithm}
\label{sec:sample: appendix}
 In this section, we have provided the DEIM algorithm \cite{DEIM}, which is a sub-part of Algorithm \ref{alg:2DDEIM}.
\begin{algorithm}[]
{\fontsize{8pt}{8pt}\selectfont
\caption{DEIM}
\begin{algorithmic}
\State{\textbf{Input} $U^T$ $=$ $\begin{bmatrix}
u_1 & u_2  \dots u_{m_1}\\
\end{bmatrix} $, $P=0$}
\State{\textbf{Output} $P,\phi$}
\State{$[\lvert \rho \rvert,\phi_1]$=$\operatorname{max}(\lvert u_1\rvert)$}
\State{$P=$[$e_{\phi_1}$], $U_1=$[$u_1$]}, $\phi =[\phi_1]$
\For{$l=2$, \dots, $m_1$}
    \State Solve \textbf{$(P^TU)c=P^Tu_l$}
     \State \textbf{$r=u_l-Uc$}
     \State{$[\lvert \rho \rvert,\phi_l]$=$\operatorname{max}(\lvert r\rvert)$}
     \State{$P_1$ =$\begin{bmatrix}
                        P_1 & e_{\phi_l}\\
                    \end{bmatrix}$,$U_1$ =$\begin{bmatrix}
                        U_1 & u_l\\
                    \end{bmatrix}$, $\phi =\begin{bmatrix}
                        \phi \\
                        \phi_l\\
                    \end{bmatrix}$}
\EndFor\\
\Return $P,\phi$
\end{algorithmic}
}
\end{algorithm}

\section{Approximation of matrix-valued function using orthogonal projection}\label{sec:OrthogonalProjection}
In this section, we describe the orthogonal projection method using the tensor basis.
The approximation of $A(t) \in \mathbb{R}^{n_1 \times n_2}$ through the orthogonal projection technique on the basis $\{u_i \otimes v_j\}_{i=1,j=1}^{m_1,m_2}$ is given by
\begin{equation}\label{eqa1}
    \hat{A}(t)=UU^TA(t)V^TV
\end{equation}
where, $U=\begin{bmatrix}
    u_1 & u_2 & \dots & u_{m_1}
\end{bmatrix}$ and $V=\begin{bmatrix}
    v_1 & v_2 & \dots & v_{m_2}
\end{bmatrix}$.

Similarly, the approximation of $f(t) \in \mathbb{R}^{n_1n_2}$ $(f(t)=vecA(t))$ through orthogonal projection on the POD basis $\{z_i\}_{i=1}^{m1_1m_2}$ is given by
\begin{equation}
    \hat{f}(t)=ZZ^Tf(t)
\end{equation}
where, $Z=\begin{bmatrix}
    z_1 & z_2 & \dots & z_{m_1m_2}
\end{bmatrix}$








\end{document}